%------------------------------------------------------------------------------
% filename: regvaracvfnew6b.tex
% date: 8 Aug 2010
% last revised: 27.Aug 2010 (by KK)
% authors: J. A. D. Appleby, K.Krol.
%------------------------------------------------------------------------------
% topmasts
%\documentclass[a4paper,10pt]{gen-j-l}
\documentclass[a4paper,10pt]{amsart}
\usepackage[mathscr]{eucal}
\usepackage{amssymb}
\usepackage{amsmath,amsthm}
\usepackage{latexsym}
\usepackage{parskip}
\usepackage{hyperref}
\newtheorem{theorem}{Theorem}[section]

\newtheorem{corollary}[theorem]{Corollary}

\theoremstyle{definition}

\newtheorem{example}[theorem]{Example}

\theoremstyle{remark}
\newtheorem{remark}[theorem]{Remark}

\newcommand{\I}{\mathbf{1}}
\numberwithin{equation}{section}

%    Macros used in paper
%\setlength{\parindent}{0pt} \baselineskip=0.9\normalbaselineskip\vspace{-3pt}

\newcommand{\Cov}{\text{\rm Cov}}

%%%%%%%%%%%%%%%%%%%
\subjclass[2000]{Primary primary classifications
  Secondary secondary classifications}
%  Amending enumerate labels

% Updating subject fication command

\begin{document}

\title[Long Memory Volterra Differential Equation]
{Long Memory in a Linear Stochastic Volterra Differential Equation}
%    Information for first author
\author{John A. D. Appleby}
%    Address of record for the research reported here
\address{School of Mathematical Sciences, Dublin City University, Dublin 9,
Ireland} \email{john.appleby@dcu.ie}
\urladdr{http://webpages.dcu.ie/\textasciitilde applebyj}
\author{Katja Krol}
%    Address of record for the research reported here
\address{Humboldt--Universit\"at zu Berlin, Institut f\"ur Mathematik, Unter den
Linden 6, 10099 Berlin, Germany}
\email{krol@mathematik.hu-berlin.de}

%    General info
\thanks{JA is partly supported by Science Foundation Ireland under the Mathematics Initiative 2007
grant 07/MI/008 ``Edgeworth Centre for Financial Mathematics''.}
\thanks{KK is supported by the Deutsche Telekom Stiftung}.\subjclass{Primary: 34K20, 34K25, 39A11, 45D05, 60G10; Secondary:  60G15, 60H10, 60H20, 60K05}

\date{\today}

\keywords{Volterra integrodifferential equations, Volterra
difference equations, It\^{o}--Volterra integrodifferential
equations, differential resolvent, asymptotic stability, infinite
moment, stationary solutions, long memory, long--range dependency,
renewal sequences with infinite mean, regular variation,
subexponential.}

\begin{abstract}
In this paper we consider a linear stochastic Volterra equation
which has a stationary solution. We show that when the kernel of the
fundamental solution is  regularly varying at infinity with a
log-convex tail integral, then the autocovariance function of the
stationary solution is also regularly varying at infinity and its
exact pointwise rate of decay can be determined. Moreover, it can be
shown  that this stationary process has either long memory in the
sense that the autocovariance function is not integrable over the
reals or is subexponential. Under certain conditions  upon the
kernel, even arbitrarily slow decay rates of the autocovariance
function can be achieved. Analogous results are obtained for the
corresponding discrete equation.
\end{abstract}

\maketitle

%%%%%%%%%%%%%%%%%%%%%%%%%%%%%%%%%%%%%%%%%%%%%%%%%%%%%%%%%%%%%%%%%%%%%%%%%

\section{Introduction}

In recent years, much attention in quantitative finance has centred
on the question of whether financial markets are efficient, and
whether there is a significant impact of past events on the current
state of the system, see e.g. Cont \cite{Cont}. A mathematical way
in which this phenomenon can be captured is through the theory of
\emph{long range dependence}, or \emph{long memory}. For continuous
time processes, this is measured by the autocovariance function of a
stationary process being non--integrable and polynomially decaying,
so it must decay more slowly than exponentially. Processes with long
memory also arise in other areas of science such as data network
traffic or hydrology see e.g.~Doukhan et al.~\cite{Doukhan}.

In this paper, we describe a class of processes, both in discrete
and continuous time which exhibit long range dependence through
non--exponential convergence of their autocovariance functions. In
the continuous case, these are solutions of scalar affine stochastic
Volterra equations of the form
\begin{align}\label{eq.stoch1}
 dX(t)= \left(aX(t) + \int_{0}^t k(t-s)X(s)\,ds\right)dt + \sigma\,dB(t)
 \quad\text{for }t\ge 0,
\end{align}
where $B$ is standard Brownian motion and $k$ is an integrable
function.
%Convergence to equilibrium and the rate of decay of linear Volterra
%integrodifferential equations have always been of great interest due
%to its applications to biology, physics and other sciences
%\cite{GrLoSt90}. If the model includes random effects, the natural
%question to ask is how the presence of random perturbations affects
%the behaviour of the solutions.
Applications of such equations stochastic Volterra equations arise
in physics and mathematical finance. In physics, for example, the
behaviour of viscoelastic materials under external stochastic loads
has  been analysed using It\^{o}--Volterra equations (cf.,
e.g.~Drozdov and Kolmanovski\u{\i} \cite{Kolm}). In financial mathematics, the presence of
inefficiency in real markets can be modelled by using stochastic
functional differential equations. Anh et al.~\cite{Anh1, Anh2} have
posited models for the evolution of asset returns using stochastic
Volterra equations with infinite memory.

For affine stochastic functional differential equations with bounded
delay, it has been shown that stationary solutions always have
exponentially fading autocovariance function, see e.g.~Gushchin and
K{\"u}chler \cite{Gush_Kue1}, Riedle \cite{Riedle}. This is a
consequence of the fact that, if an autonomous linear differential
equation with finite delay is stable, then its resolvent converges
to zero at an exponentially fast rate, see Hale and
Lunel~\cite{HaleLun:1993a}.

In order to obtain polynomial convergence results for linear
autonomous Volterra equations, it is necessary to consider kernels
$k$ which decay non--exponentially, both for deterministic and
stochastic equations. While a substantial literature exists in the
deterministic case (see e.g.,
~\cite{SheaWaing:1975,GelfandRaikShil:1964,JordanWheeler:1980,jadwr:2002b,jaigdwr:2004b,
App_exact_discrete}) only a few results for non--exponential
convergence phenomena of linear stochastic autonomous Volterra
equations exist, and those that do concern the asymptotic stability
of point equilibria. Examples of such papers include Appleby
\cite{Appleby1, Appleby2} for pointwise convergence rates, Appleby
and Riedle \cite{appleby_riedle} for convergence rates in weighted
$L^p$--spaces, and Mao and Riedle for mean square convergence rates
\cite{MaoRiedle:06}. In particular, polynomial convergence rates of
the autocovariance function of \eqref{eq.stoch1} have not been
recorded.

In this paper, we examine the asymptotic behaviour of the
autocovariance function of asymptotically stationary solutions of
\eqref{eq.stoch1}. To do this, our first class of results concerns
the exact rate of convergence to zero of the solution of the
differential resolvent associated with \eqref{eq.stoch1}, namely
\begin{equation}    \label{eq.fund1}
r'(t) = ar(t)+ \int_{0}^t k(t-s)r(s)\,ds \quad\text{for } t \ge
0,\quad
 r(0)=1.
\end{equation}
We consider first equations for which the kernel $k$ is positive and
integrable with infinite first moment. In this case it is only known
to date that the resolvent $r$ converges to zero and is not
integrable.

%In this work, we establish the exact rate of convergence to zero,and prove a corresponding result for Volterra difference equations.
%
%Even such simple  Volterra equations of the form \eqref{eq.fund1}
%enjoy the property that not every solution decays exponentially if
%the zero solution is asymptotically stable, see
%Murakami~\cite{Mur:90a,Mur:91a} and Appleby and
%Reynolds~\cite{jadwr:2004c}. On the other hand, linear differential
%equations with bounded delay are asymptotically stable if and only
%if every solution decays exponentially, see Hale and
%Lunel~\cite{HaleLun:1993a}.  Consequently, it is quite an
%open--ended task to determine the exact  rate of convergence of the
%solutions of \eqref{eq.fund1}. If the resolvent $r$ is integrable,
%such results are covered by the work Shea and
%Wainger~\cite{SheaWaing:1975}, Gelfand et
%al.~\cite{GelfandRaikShil:1964}, and Jordan and
%Wheeler~\cite{JordanWheeler:1980}, and are based on {\em weighted
%function spaces}. Other results on {\em pointwise} and {\em
%subexponential} rates of decay of solutions of Volterra equations
%may be found in Appleby and Reynolds~\cite{jadwr:2002b} and Appleby,
%Reynolds, and Gy\H ori~\cite{jaigdwr:2004b, App_exact_discrete,
%jaigdwr:2004a}.
%On the other hand, stationary solutions of linear  differential
%equations with bounded delay perturbed by Brownian motion always have  exponentially fading covariance function,
%see e.g.~Gushchin and  K{\"u}chler \cite{Gush_Kue1}, Riedle \cite{Riedle}.

In this paper we first show that if the kernel $k$ additionally
satisfies $a+\int_0^\infty k(s)\,ds=0$ and the tail integral
$\lambda(t):=\int_t^\infty k(s)\,ds$ is a log--convex regularly
varying function with index $\alpha$, then the solution $r$ is
decays at a hyperbolic rate, according to
\begin{equation} \label{r_lim1}
\lim_{t\to\infty} r(t)t^{1-\alpha}L(t)=\frac{\sin \alpha \pi}{\pi},
\end{equation}
where $L$ is a slowly varying function related to $k$. Corresponding
asymptotic results are established in discrete time. The discrete
analogue of equation \eqref{eq.fund1} with positive summable kernel
of infinite moment corresponds to the renewal sequence  of a
null--recurrent Markov chain \cite{Giacomin}, and under similar
additional assumptions on the kernel, the hyperbolic decay of the
sequence relies upon  well--known results by Garsia and Lamperti
\cite{Garsia} and Isaac \cite{Isaac}.

Our second class of results in this paper employ the convergence
rate of the resolvent $r$ to investigate the long memory properties
of the solution of the It\^{o}--Volterra differential equation
\eqref{eq.stoch1} and its discrete analogue.
%We are interested in kernels $k$ that lead to long memory in the
%sense that the covariance function is non--integrable and decays
%polynomially \cite{Cont}.
% In order to obtain stationarity we need the resolvent $r$ to be square integrable.
It turns out, that under  the same conditions on the kernel $k$, the
equation \eqref{eq.stoch1} possesses an asymptotically stationary
solution for $0<\alpha<1/2$. There also exists  a limiting equation
which is stationary and its autocovariance function $c$ obeys
\begin{equation} \label{eq:asymptotocs_c1}
\lim_{t\to\infty} c(t) L^2(t)t^{1-2\alpha} =
\sigma^2\frac{\Gamma(1-2\alpha)\Gamma(\alpha)}{\Gamma(1-\alpha)}\cdot
\frac{ \sin^2(\pi \alpha)}{\pi^2}.\end{equation} Moreover, because
$c$ is non--integrable, the process has long memory. Again,
corresponding results hold in discrete time.

If $\alpha>1/2$, no stationary solutions exist and the case
$\alpha=1/2$ turns out to be critical. In this situation, we give
necessary and sufficient conditions for the existence of a
stationary solution and show not only that its autocovariance
function has long memory, but  that it can also decay at an
arbitrarily slow rate in the class of slowly varying functions.

In order to give a complete characterization the asymptotic
behaviour of the autocovariance function of \eqref{eq.stoch1}, we also
treat the cases $a+\int_0^\infty k(s)\,ds<0$  and  $a+\int_0^\infty
k(s)\,ds>0$. While in the latter case no stationary solution exists,
we show in the first case, that under weaker assumptions on the
kernel $k$, the autocovariance function of the stationary solution
is integrable. Nevertheless, its decay is very slow: the rate of
convergence to zero is the same as the decay rate of $k$, that is
hyperbolic.

Although we have mentioned discrete results only briefly in this
introduction, there are many reasons to formulate the models
\eqref{eq.fund1} and \eqref{eq.stoch1} in discrete time. When
modelling dynamic real--world phenomena, it is desirable that
properties formulated in discrete or continuous time should be
consistent. In this paper, our results demonstrate that the long or
subexponential memory are general properties of the Volterra model
and do not depend on the continuity assumption. Secondly, by
applying for example a constant step size Euler--Maruyama scheme to
the continuous equation \eqref{eq.stoch1}, we obtain consistent
estimates of the decay rate of the autocovariance function. These
decay estimates stabilise appropriately to those obtained in the
continuous case in the limit as the step size tends to zero.
%Hence, the solution exhibits long-memory in the sense that the covariance function is non-integrable and decays polynomially, \cite{Cont}.

%\section{Stationary Solutions of Linear Stochastic Volterra Differential Equations}
\section{Discrete and continuous stochastic Volterra--Equations}
\subsection{Mathematical Preliminaries} \label{prelim}
We denote the spaces of real--valued  continuous functions by $C([0,\infty);\mathbb{R})$. Let $L^p([0,\infty);\mathbb{R})$ ($\ell^p$), $p\ge 1$, denote the space of real--valued  measurable functions $f$ (sequences $(f_n)_{n\in \mathbb{N}}$) satisfying
\[ \int_0^\infty |f(t)|^p\, dt < \infty \qquad \bigl( \sum_{n=0}^\infty |f_n|^p<\infty\bigr).\]
%\begin{align*}
% L([0,\infty),\mathbb{R}):=\left\{f:[0,\infty)\to\mathbb{R}:
%  \, \int_0^\infty  |f(t)|\, dt<\infty\right\}.
% \end{align*}
We write $f\sim g$ for $x\to x_0\in \mathbb{ R}\cup \{\pm \infty\}$ if $ \lim_{x\to x_0} f(x)/g(x)=1.$\\
 A function $L\,:\,[0, \infty)\to (0, \infty)$ is {\em slowly varying at infinity} if for all $x>0,$
 \begin{equation}
 \lim_{t\to \infty} \frac{L(xt)}{L(t)}=1.
 \end{equation}
 A function $f$ {\em varies regularly with index $\alpha\in \mathbb{R}$}, $f\in \mbox{RV}_\infty(\alpha)$,  if it is of the form
 \begin{equation}
 f(t)=t^\alpha L(t)\end{equation}
 with $L$ slowly varying, see e.g.~Feller \cite{Feller:1}, Ch.\ VIII.8. \\
The definition of a regularly varying sequence is a counterpart of the continuous definition \cite{Bojanic}: a sequence of positive numbers $(c_n)_{n\in \mathbb{N}}$ is said to be {\em  regularly varying of index $\rho\in \mathbb{R}$} ($c$ is {\em slowly varying} if $\rho=0$), if \[\lim_{n\to\infty} \frac{c_{[\lambda n]}}{c_n}=\lambda^\rho, \quad \mbox{for every }\lambda>0,\]
where $[x]$ denotes the integer part of $x\in \mathbb{R}_+$. A regularly varying sequence is embeddable as the integer
values of a regularly varying function: the function $c(\cdot)$, defined on $[0, \infty)$ by $c(x):=c_{[x]}$ is  regularly varying of index $\rho$.% For these results and more information on regularly varying sequences the reader is referred to Bojanic and Seneta \cite{Bojanic}.
%\end{remark}
\subsection{Continuous--time Gaussian Volterra equations}
We first turn our attention to the deterministic Volterra equation
in $\mathbb{R}$:
\begin{align}    \label{eq.fund}
x'(t) = ax(t)+ \int_{0}^t k(t-s)x(s)\,ds \quad\text{for } t \ge
0,\quad
 x(0)=x_0.
\end{align}
 For any $x_0\in\mathbb{R}$  there is a unique $\mathbb{R}$--valued
function $x$ which satisfies \eqref{eq.fund} on $[0,\infty)$. %The
%function $x\equiv0$ is a solution of \eqref{eq.fund} and is called
%the zero solution of \eqref{eq.fund}. The definition of various
%standard notions of stability of the zero solution required for
%our analysis are detailed in Miller~\cite{Miller:1}, to which the
%reader may refer.\\
The so--called {\em fundamental solution or resolvent of
\eqref{eq.fund}} is the real--valued function
$r:[0,\infty)\to\mathbb{R}$, which is the unique solution of the equation \eqref{eq.fund1}.

Let $(\Omega, \mathcal{F}, \mathbb{P})$ be a probability space equipped
with a filtration $(\mathcal{F}_{t})_{t \geq 0}$, and let
$B=\{B(t):t \geq 0\}$ be a one--dimensional Brownian motion on
this probability space. We will consider the stochastic
integro--differential equation of the form
\begin{align}\label{eq.stoch}
\begin{split}
 dX(t)&= \left(aX(t) + \int_{0}^t k(t-s)X(s)\,ds\right)dt + \sigma\,dB(t)
 \quad\text{for }t\ge 0,\\
  X(0)&=X_0,
\end{split}
\end{align}
where $k$ is a continuous, integrable real--valued function, and
$\sigma$ is a non--zero real constant. The initial condition $X_0$
is a real--valued, $\mathcal{F}_0$--measurable random variable with
$\mathbb{E}|X_0|^2<\infty$ which is independent of $B$. The
existence and uniqueness of a continuous solution $X$ of
\eqref{eq.stoch} with $X(0)=X_0$ $\mathbb{P}$--a.s.\ is covered in
Berger and Mizel~\cite{BergMiz:2}, for instance. Independently, the
existence and uniqueness of solutions of stochastic functional
equations was established in It\^o and Nisio~\cite{ItoNisio:64} and
Mohammed~\cite{Moh:84}. In fact, $X$ has the variation of constants
representation
\begin{equation} \label{eq.varp}
X(t)=r(t)X_0 + \int_0^t r(t-s)\sigma \,dB(s), \quad t\geq 0.
\end{equation}

We first discuss the existence of asymptotically  stationary solutions of \eqref{eq.stoch}.
It transpires that the critical condition to guarantee stationarity is that the fundamental solution $r$ of
\eqref{eq.fund} is in $L^2([0,\infty);\mathbb{R})$.
\begin{theorem}\label{thm.existence covarfn}
Let $k\in L^1([0,\infty);\mathbb{R})\cap C([0,\infty);\mathbb{R})$. Suppose the fundamental solution $r$ of
\eqref{eq.fund} obeys $r\in L^2([0,\infty);\mathbb{R})$. Let $\sigma\in\mathbb{R}\setminus\{0\}$.
Let $X$ be the solution of \eqref{eq.stoch}. Then for every $t\geq 0$ there exists a real--valued function $c$ such that
\begin{equation} \label{eq.limcovar}
c(t):=\lim_{s\to\infty} \Cov (X(s),X(s+t))=\sigma^2\int_0^\infty r(s)r(s+t)\,ds.
\end{equation}
%Moreover $c$ obeys the differential equation
%\begin{equation} \label{eq.covardiffeq}
%c'(t)=\int_{[0,\infty)} \nu(ds)\,c(t-s), \quad t\in \mathbb{R},
%\end{equation}
%where $c(t)=c(-t)$ for $t\in\mathbb{R}$ and $c\in L^1([0,\infty);\mathbb{R}^{d\times d})$.
\end{theorem}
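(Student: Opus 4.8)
The plan is to insert the variation of constants representation \eqref{eq.varp} into the definition of the covariance, evaluate the resulting expectations with the It\^o isometry so that everything reduces to deterministic integrals of $r$, and then pass to the limit $s\to\infty$ using only the hypothesis $r\in L^2$.

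First I would center the solution. Since the It\^o integral in \eqref{eq.varp} has zero mean and $X_0$ is independent of $B$, we have $\mathbb{E}[X(s)]=r(s)\mathbb{E}[X_0]$, and writing $Y_0:=X_0-\mathbb{E}[X_0]$,
\[ X(s)-\mathbb{E}[X(s)] = r(s)Y_0 + \sigma\int_0^s r(s-u)\,dB(u). \]
Substituting this and the analogous identity at time $s+t$ into $\Cov(X(s),X(s+t))$ expands it into four terms. The two mixed terms are multiples of $\mathbb{E}\bigl[Y_0\int_0^{\cdot} r(\cdot-u)\,dB(u)\bigr]$, which vanish because $Y_0$ and $B$ are independent and both $\mathbb{E}[Y_0]$ and the mean of the It\^o integral are zero. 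The term carrying $Y_0^2$ contributes $r(s)r(s+t)\,\mathbb{E}[Y_0^2]$.

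For the remaining term, the expectation of a product of two It\^o integrals with upper limits $s$ and $s+t$, I would split $\int_0^{s+t}=\int_0^s+\int_s^{s+t}$ and note that the increment over $[s,s+t]$ is independent of $\mathcal{F}_s$ with zero mean, so only the common part survives; the It\^o isometry then gives $\sigma^2\int_0^s r(s-u)r(s+t-u)\,du$, and the substitution $v=s-u$ turns this into $\sigma^2\int_0^s r(v)r(v+t)\,dv$. Collecting the pieces yields, for each fixed $t\ge 0$,
\[ \Cov(X(s),X(s+t)) = r(s)r(s+t)\,\mathbb{E}[Y_0^2] + \sigma^2\int_0^s r(v)r(v+t)\,dv. \]

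It remains to let $s\to\infty$. Because $r\in L^2$, its shift $r(\cdot+t)$ is also in $L^2$, so by Cauchy--Schwarz the map $v\mapsto r(v)r(v+t)$ is absolutely integrable and the integral term converges to $\sigma^2\int_0^\infty r(v)r(v+t)\,dv$, the asserted value in \eqref{eq.limcovar}. The one genuinely non-routine point, which I expect to be the main obstacle, is to show that the boundary term $r(s)r(s+t)\mathbb{E}[Y_0^2]$ vanishes, i.e.\ that $r(t)\to0$. For this I would return to the resolvent equation \eqref{eq.fund1}: since $k\in L^1$ and $r\in L^2$, Young's inequality gives $k*r\in L^2$, hence $r'=ar+k*r\in L^2$. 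With $r,r'\in L^2$ and $r\in C^1$, the identity $r(t)^2=r(0)^2+2\int_0^t r(u)r'(u)\,du$ has a finite limit as $t\to\infty$ (the integral converges absolutely by Cauchy--Schwarz), and this limit must be zero since $r\in L^2$; thus $r(t)\to0$, the boundary term disappears, and the proof is complete.
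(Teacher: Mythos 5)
Your proof is correct and is essentially the paper's own argument: the paper dispatches this theorem in a single line (``the result follows directly from \eqref{eq.varp} and the fact that $X_0$ is independent of $B$''), and your computation --- centering via $Y_0$, killing the mixed terms by independence, and applying the It\^o isometry to get $r(s)r(s+t)\,\mathbb{E}[Y_0^2]+\sigma^2\int_0^s r(v)r(v+t)\,dv$ --- is exactly what that remark compresses. Your justification that $r(t)\to 0$ (via $r'=ar+k\mathbin{*}r\in L^2$ by Young's inequality, the same estimate the paper uses in Section \ref{proof1}, together with $r(t)^2=1+2\int_0^t r(u)r'(u)\,du$) fills in the one detail the paper leaves implicit, and it is indeed needed to make the boundary term vanish when $X_0$ is not deterministic.
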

The result follows directly from \eqref{eq.varp}, and the fact that $X_0$ is independent of $B$.

The following theorem shows that \eqref{eq.stoch} has a limiting equation which possesses a stationary, rather than
an asymptotically stationary solution. To this end,
let $B_1=\{B_1(t):t\geq 0\}$ and $B_2=\{B_2(t):t\geq 0\}$ be independent standard Brownian motions,
and consider the process $B=\{B(t):t\in \mathbb{R}\}$ defined by
\begin{equation} \label{eq.whollineBM}
B(t)=
\left\{
\begin{array}{cc}
B_1(t), & t>0\\
B_2(-t), &t\leq 0.
\end{array} \right.
\end{equation}
Then $B$ is a standard Brownian motion defined on the whole line.

\begin{theorem}\label{thm.existence covarfn stat}
Let $k\in L^1([0,\infty);\mathbb{R})\cap C([0,\infty);\mathbb{R})$. Suppose the fundamental solution $r$ of
\eqref{eq.fund} obeys $r\in L^2([0,\infty);\mathbb{R})$. Let $\sigma\in\mathbb{R}\setminus\{0\}$.
Let $B=\{B(t):t\in\mathbb{R}\}$ be the standard one--dimensional Brownian motion defined by
\eqref{eq.whollineBM}. Then the unique continuous adapted process which obeys
\begin{align} \label{eq.stochstat}
\begin{split}
dX(t) &= \left(aX(t)+\int_{0}^\infty k(s)X(t-s)\,ds\right)\,dt + \sigma\,dB(t), \quad t>0;\\
X(t) &= \int_{-\infty}^t r(t-s)\sigma \,dB(s), \quad t\leq 0,
\end{split}
\end{align}
is given by
\begin{equation} \label{eq.Xstat}
X(t)= \int_{-\infty}^t r(t-s)\sigma \,dB(s), \quad t\in \mathbb{R}.
\end{equation}
Moreover, $X$ is a stationary zero mean Gaussian process with autocovariance function given by
\begin{equation} \label{eq.statcovar}
c(t)= \Cov (X(s),X(s+t))=\sigma^2\int_0^\infty r(s)r(s+t)\,ds.
\end{equation}
\end{theorem}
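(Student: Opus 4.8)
The plan is to verify three things in turn: that the process $X(t)=\int_{-\infty}^t r(t-s)\sigma\,dB(s)$ is well-defined, that it solves the stated equation \eqref{eq.stochstat} and is the unique such adapted process, and finally that it is a stationary zero-mean Gaussian process with the claimed autocovariance. Well-definedness of the It\^o integral over the whole line is exactly where the hypothesis $r\in L^2([0,\infty);\mathbb{R})$ is used: writing the integral as $\int_0^\infty r(u)\sigma\,dB(t-u)$ via the change of variable $u=t-s$, the variance is $\sigma^2\int_0^\infty r(u)^2\,du<\infty$, so $X(t)$ has finite second moment for each $t$. Since the integrand is deterministic, $X(t)$ is a centred Gaussian random variable, and being a limit of Gaussian integrals the whole family $\{X(t)\}$ is a Gaussian process.

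\emph{Stationarity and the autocovariance.} I would compute the covariance directly. For $s\in\mathbb{R}$ and $t\ge 0$, writing both integrals in the shifted form and using the It\^o isometry together with the independence structure of the two-sided Brownian motion,
\begin{equation}
\Cov(X(s),X(s+t)) = \sigma^2\int_{-\infty}^{\min(s,s+t)} r(s-u)r(s+t-u)\,du = \sigma^2\int_0^\infty r(v)r(v+t)\,dv,
\end{equation}
after substituting $v=s-u$; the answer depends only on $t$, so $X$ is (second-order, hence Gaussian-)stationary, and this establishes \eqref{eq.statcovar}. The finiteness of this last integral follows from Cauchy--Schwarz and $r\in L^2$. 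Note that this is precisely the function $c$ appearing in Theorem~\ref{thm.existence covarfn}, which confirms that the stationary process here is the genuine limit of the asymptotically stationary solutions of \eqref{eq.stoch}.

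\emph{Verifying the equation and uniqueness.} The substantive analytic step is to show that $X$ defined by \eqref{eq.Xstat} actually satisfies the integro-differential equation \eqref{eq.stochstat} for $t>0$. The strategy is to split $X(t)=\int_{-\infty}^0 r(t-s)\sigma\,dB(s)+\int_0^t r(t-s)\sigma\,dB(s)$ and apply the defining resolvent equation \eqref{eq.fund1}: differentiating $r(t-s)$ in $t$ and invoking $r'(u)=ar(u)+\int_0^u k(u-w)r(w)\,dw$, one pushes the derivative inside the stochastic integral, then exchanges the order of the resulting deterministic $k$-integral with the It\^o integral via a stochastic Fubini theorem. The boundary term from $r(0)=1$ in the upper limit of the second integral produces exactly the $\sigma\,dB(t)$ noise term, while the two resolvent terms reassemble into $aX(t)+\int_0^\infty k(s)X(t-s)\,ds$. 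For uniqueness I would take two solutions with the same initial segment on $(-\infty,0]$, subtract, and observe that their difference solves the deterministic homogeneous equation \eqref{eq.fund} with zero initial data, whose only solution is identically zero.

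I expect the main obstacle to be the justification of interchanging differentiation and the stochastic Fubini interchange in the verification step: one must check the integrability conditions that license moving $d/dt$ inside the two-sided It\^o integral and swapping the $k$-convolution with $dB$. The finite first moment of $k$ (since $k\in L^1$) together with $r\in L^2$ should supply the required dominating bounds, but this is the part demanding genuine care rather than formal manipulation.
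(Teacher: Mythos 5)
Your route is essentially the paper's: Gaussianity, stationarity and \eqref{eq.statcovar} via the It\^o isometry (which the paper regards as immediate once $r\in L^2([0,\infty);\mathbb{R})$), then verification that \eqref{eq.Xstat} satisfies \eqref{eq.stochstat} by combining the resolvent equation \eqref{eq.fund1} with a stochastic Fubini interchange, licensed by $r\in L^2$ and Young's inequality $\|k\ast r\|_{L^2}\le\|k\|_{L^1}\|r\|_{L^2}$. (One terminological slip: what is used is integrability of $k$, not a ``finite first moment''; in the regime of interest $\int_0^\infty sk(s)\,ds=\infty$.) Your uniqueness argument --- the difference of two continuous solutions with the same past solves the homogeneous deterministic equation \eqref{eq.fund} with zero data, hence vanishes --- is correct, and in fact the paper leaves this point implicit.

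There are, however, two places where your sketch falls short of the paper's proof. The first is a genuine omission: the theorem concerns the unique \emph{continuous} adapted solution, so you must show that the process defined by \eqref{eq.Xstat} admits a continuous modification; finiteness of the variance at each fixed $t$ does not give this. The paper devotes the first half of its proof to exactly this point: writing increments of the integrand as $r(t-s)-r(u-s)=\int_u^t r'(v-s)\,dv$ and using Cauchy--Schwarz and Fubini, it bounds $\mathbb{E}\bigl[(X(t)-X(u))^2\bigr]$ by $\sigma^2(t-u)^2\int_{\mathbb{R}}r'(s)^2\,ds$, which is finite because $r'=ar+k\ast r\in L^2([0,\infty);\mathbb{R})$ (again by Young's inequality), and then invokes the Kolmogorov--Chentsov theorem; since the increments are Gaussian, such second-moment estimates upgrade to arbitrarily high moments, so the criterion applies. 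The second is a point of rigor in your verification step: you cannot literally ``push $d/dt$ inside'' the two-sided stochastic integral, because $t\mapsto\int_0^t r(t-s)\,dB(s)$ is not differentiable --- the boundary contribution at $s=t$ is precisely the noise term. The argument must be run on increments, as the paper does: $X(t)-X(0)=\int_{-\infty}^t\bigl(\int_{s\vee 0}^t r'(u-s)\,du\bigr)\,dB(s)+\sigma B(t)$, after which substituting \eqref{eq.fund1} for $r'$ and applying stochastic Fubini reassembles the drift $\int_0^t\bigl(aX(u)+\int_0^\infty k(v)X(u-v)\,dv\bigr)\,du$. Your remark that the $r(0)=1$ boundary term produces $\sigma\,dB(t)$ shows you intend exactly this computation, but as written (``moving $d/dt$ inside the It\^o integral'') the step is not a legitimate operation and should be rephrased in integrated form.
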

It is clear that if $r$ is in $L^2([0,\infty);\mathbb{R})$ that $X$ defined by \eqref{eq.Xstat} is a stationary zero mean Gaussian process
with autocovariance function given by \eqref{eq.statcovar}. To show that $X$ satisfies \eqref{eq.stochstat} requires more work,
and a proof is given in Section \ref{proof1}.

Theorem~\ref{thm.existence covarfn stat} provides direction for the investigations in this paper.
It is readily seen that $r\in L^1([0,\infty);\mathbb{R})$ implies $c\in L^1([0,\infty);\mathbb{R})$.
Therefore in order to possess long memory %(in the sense that $c$ is not integrable),
but still to have stationary solutions, we need to consider conditions on the kernel $k$ in \eqref{eq.fund}
such that the fundamental solution $r$ of \eqref{eq.fund} obeys $r\in L^2([0,\infty);\mathbb{R})$
but $r\not\in L^1([0,\infty);\mathbb{R})$.

Section \ref{section:continuous} gives an example of how this can be achieved.
The crucial hypotheses on $k$ is that it is regularly varying and its tail integral is log--convex:
this enables us to prove that $r$ is regularly varying and to determine the exact rate of decay of $r$.
We then show how the asymptotic behaviour of $c$ can be inferred from $r$ when
$r$ is regularly varying in such a way that $r\in L^2([0,\infty);\mathbb{R})$
but $r\not\in L^1([0,\infty);\mathbb{R})$. The results enable us to determine the exact rate of
decay of the autocovariance function $c$ in terms of the rate of decay of  $k$.
\subsection{Discrete--time Volterra equations}
Let $(\Omega, \mathcal{F}, \mathbb{P})$ be a probability space equipped
with a filtration $(\mathcal{F}_{n})_{n\in \mathbb{N}}$. We consider the discrete version of \eqref{eq.stoch}:
\begin{align}
\label{discr_stoch}
\begin{split}
X_{n+1}-X_{n}&=aX_n +\sum_{j=1}^n k_j X_{n-j} +\xi_{n+1}, \quad n\ge 0,\\
X_0=x_0, \end{split}\end{align}
where $k$ is a positive summable kernel, $a:=-\sum_{j=1}^\infty k_j$ and $\xi=\{\xi_n \,:\, n\in \mathbb{N}\}$ is a sequence of independent,
identically distributed random variables with $\mathbb{E}(\xi_n)=0, \, \mathbb{E}(\xi_n^2)=\sigma^2>0$ for all $n\in \mathbb{N}$.
$x_0$ is an $\mathcal{F}_0$--measurable random variable with $\mathbb{E}(x_0^2)<\infty$ which is independent of $\xi$.
Let $r=\{ r_n:n\in \mathbb{N}\}$ denote the fundamental solution of \eqref{discr_stoch}, i.e., the unique solution of
\begin{equation}
r_{n+1}-r_n=ar_n+\sum_{j=1}^n k_j r_{n-j}, \quad n\ge 1, \quad r_0=1.\label{discrete}\end{equation}
For more information on Volterra difference equations, the reader is referred to the book of Elaydi \cite{elaydi}.
An analogous  result to Theorem \ref{thm.existence covarfn stat} holds for \eqref{discr_stoch}:
\begin{theorem} \label{theorem.discrete}
Suppose that $k\in \ell^1$ and the fundamental solution \eqref{discrete} obeys $r\in \ell^2$.
 Then there is a unique adapted process $X$ which obeys
\begin{align}\label{eq.stochstat_discr}\begin{split}
X_{n+1}-X_{n}&=aX_n +\sum_{j=1}^ \infty k_j X_{n-j} +\xi_{n+1}, \quad n\ge 0;\\
X_n &= \sum_{j=-\infty}^n r_{n-j}\xi_j, \quad n< 0,\end{split}
\end{align}
where $\xi$ is extended to $n\in \mathbb{Z}$ by taking an independent copy $\xi^1$ of $\xi$ (defined on the same probability space) and
setting $\xi_{-n}=\xi^1_n, \, n\in \mathbb{N}$. $X$ is a stationary zero mean process with autocovariance function given by
\begin{equation}\label{cov_discrete}
c(h)= \Cov (X_n,X_{n+h})=\sigma^2\sum_{n=0}^\infty r_n r_{n+h}, \quad h\in\mathbb{N}.
\end{equation}
\end{theorem}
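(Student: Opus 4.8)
The plan is to mirror the continuous argument of Theorem~\ref{thm.existence covarfn stat}: exhibit the explicit moving-average process as the candidate solution, check it is well defined in $L^2(\Omega)$ with the stated second-order structure, verify that it satisfies the recursion, and finally settle uniqueness. Throughout I would work in the Hilbert space $L^2(\Omega)$, exploiting that the extended innovations $(\xi_j)_{j\in\mathbb{Z}}$ are i.i.d.\ with mean zero and variance $\sigma^2$, hence form an orthogonal system with $\mathbb{E}[\xi_i\xi_j]=\sigma^2\delta_{ij}$.

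First I would reindex the prescribed formula: setting $X_n:=\sum_{j=-\infty}^n r_{n-j}\xi_j=\sum_{m=0}^\infty r_m\,\xi_{n-m}$ for every $n\in\mathbb{Z}$ produces a candidate defined on the whole integer line (agreeing with the prescribed values for $n<0$ by construction). Because $r\in\ell^2$, the partial sums of $\sum_{m\ge0} r_m\xi_{n-m}$ are Cauchy in $L^2(\Omega)$ by orthogonality, so $X_n$ is a well-defined element of $L^2(\Omega)$ with $\mathbb{E}[X_n]=0$. Evaluating the inner product of two such series term by term gives, for $h\ge0$, $\Cov(X_n,X_{n+h})=\sigma^2\sum_{m=0}^\infty r_m r_{m+h}$, independent of $n$; this is \eqref{cov_discrete}, and strict stationarity follows since $X_n$ is a fixed measurable functional of the shifted i.i.d.\ sequence $(\xi_{n-m})_{m\ge0}$.

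The central step is to verify the recursion in \eqref{eq.stochstat_discr} for $n\ge0$. I would first record that $\sum_{j=1}^\infty k_j X_{n-j}$ converges absolutely in $L^2(\Omega)$: since $\|X_{n-j}\|_{L^2}=\sigma\|r\|_{\ell^2}$ for every $j$ and $k\in\ell^1$, the triangle inequality bounds $\sum_j|k_j|\,\|X_{n-j}\|_{L^2}$ by $\sigma\|r\|_{\ell^2}\|k\|_{\ell^1}<\infty$. Now set $D_n:=X_{n+1}-X_n-aX_n-\sum_{j=1}^\infty k_j X_{n-j}-\xi_{n+1}$. Every term lies in the closed linear span of $\{\xi_i:i\in\mathbb{Z}\}$, hence so does $D_n$, and therefore $D_n=0$ in $L^2(\Omega)$ if and only if $\mathbb{E}[D_n\xi_i]=0$ for all $i$. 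Using $\mathbb{E}[X_p\xi_i]=\sigma^2 r_{p-i}\I_{\{i\le p\}}$ and writing $m=n-i$, the coefficient $\mathbb{E}[D_n\xi_i]/\sigma^2$ collapses, for $m\ge0$, to $r_{m+1}-r_m-ar_m-\sum_{j=1}^m k_j r_{m-j}$, which vanishes by the resolvent equation \eqref{discrete} (with the empty-sum convention at $m=0$, i.e.\ $r_1-r_0=ar_0$); for $i=n+1$ the contribution is $r_0-1=0$, and for $i>n+1$ every coefficient is zero. Hence $D_n=0$ a.s.\ for all $n\ge0$, so $X$ solves \eqref{eq.stochstat_discr}.

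The orthogonal-expansion device is precisely what makes rigorous the otherwise delicate interchange of the two infinite sums in $\sum_{j\ge1}k_j X_{n-j}=\sum_{j\ge1}k_j\sum_{l\ge0}r_l\xi_{n-j-l}$, and I expect the bookkeeping in the previous paragraph—collecting the coefficient of each $\xi_i$ and matching it to the resolvent equation—to be the main obstacle; the absolute $L^2$-convergence established above is exactly what licenses it. For uniqueness, the formula prescribes $X_n$ for $n<0$, while for $n\ge0$ the recursion expresses $X_{n+1}$ as a deterministic function of $\{X_m:m\le n\}$ and $\xi_{n+1}$ (the series converging as above); an induction on $n$ then forces any adapted solution to coincide with $X$. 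This also confirms adaptedness once the filtration is taken to contain $\sigma(\xi_j:j\le n)$.
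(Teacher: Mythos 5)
Your proof is correct. Note first that the paper never writes out a separate proof of this discrete theorem: it is stated as the analogue of Theorem~\ref{thm.existence covarfn stat}, whose continuous-time proof in Section~\ref{proof1} is the only argument on record (and later, in the proof of Theorem~\ref{theorem.mainresult_discrete}, the stationary solution is simply asserted to be $X_n=\sum_{j=-\infty}^n r_{n-j}\xi_j$ with covariance \eqref{cov_discrete}). Measured against that continuous-time template, your route differs in one genuine way: the paper verifies that the moving average solves the equation by direct substitution of the resolvent equation into the telescoped increment, followed by an interchange of integration orders justified by the stochastic Fubini theorem (plus Kolmogorov--Chentsov for path continuity, which has no discrete counterpart), whereas you verify it by a projection argument --- the residual $D_n$ lies in the closed linear span of the orthogonal system $\{\xi_i : i\in\mathbb{Z}\}$, and matching the coefficient of each $\xi_i$ against the resolvent recursion \eqref{discrete} forces $D_n=0$. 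Your device replaces the Fubini-type interchange of the two infinite sums with continuity of the inner product in $L^2(\Omega)$, which is arguably the cleaner rigorous tool in the discrete setting; the underlying computation (telescoping plus the resolvent identity, including your correct handling of the $m=0$ case via the empty-sum convention $r_1-r_0=ar_0$) is the same. You also make the uniqueness claim explicit by induction on $n$ from the prescribed values on $n<0$, a point the paper leaves implicit in both the discrete and continuous settings.
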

Again, we are able to show that if $(k_n)_{n\in \mathbb{N}}$ is a so--called Kaluza--sequence, then $r$ satisfies $r\in \ell^2$ but $r\not \in \ell^1$ with exact rate of decay specified. From \eqref{cov_discrete} we can deduce the exact asymptotic behaviour of the autocovariance function of the stationary solution.
\section{Long memory in the continuous equation} \label{section:continuous}
\subsection{Asymptotic Behaviour of the Deterministic Resolvent} \label{sec:deterministic}
This section gives the exact rate of decay of the solution of a scalar linear Volterra
differential equation with a non--integrable solution $r$ which nonetheless obeys
$r(t)\to 0$ as $t\to\infty$. % and $r\in L^2([0,\infty),\mathbb{R})$.
Suppose that $a+\int_0^\infty k(s)\, ds=0$ and let  $k$ satisfy the following conditions
\begin{enumerate}
\item[(C1)] $k\in L^1([0,\infty);(0,\infty))\cap C([0,\infty);(0,\infty))$,
\item[(C2)]$t\mapsto \log \lambda(t)$ is a convex function, where
\begin{equation}\label{def_lambda}
\lambda(t):=\int_t^\infty k(s)\, ds, \end{equation}
%$k$ is completely monotone, i.e.,
%that it obeys
%\begin{equation}\label{eq:kcompmon}
%(-1)^j k^{(j)}(t)\geq 0, \quad \text{for all $t>0$ and for each $j=0,1,\ldots$}
%\end{equation}
\item[(C3)]
$\lambda(t)=L(t)t^{-\alpha}$ with  $\alpha\in(0,1)$ and a slowly varying at infinity function $L$. %such that $\lambda\in\text{RV}_\infty(-\alpha)$.
\end{enumerate}
\begin{remark}
The last two conditions are satisfied, if $k$ is a completely monotone function such that $k\in \mbox{RV}_\infty(-1-\alpha)$. Condition (C2) is equivalent to
\begin{itemize}
 \item[(C2*)] $\displaystyle
\frac{\lambda(t)}{\lambda(t+T)} \quad \mbox{is non--increasing in $t$ for all $T>0$}$.
\end{itemize} Proofs can be found in Miller \cite{Miller2}.
\end{remark}
%\begin{equation}\label{eq:eqk11}
%k\in C([0,\infty),(0,\infty))\cap L^1([0,\infty),(0,\infty)).
%\end{equation}
Condition (C1) implies existence of a unique continuous function $r$ which is a
solution of the integro--differential equation \eqref{eq.fund1}.
%\begin{equation} \label{eq:rscalar}
%r'(t)=-\int_0^\infty k(u)\,du \,r(t)+\int_0^t k(t-s)r(s)\,ds, \quad
%t>0;\quad r(0)=1.
%\end{equation}
In particular, it follows from  (C3) that $k$ obeys
%If in addition, $k$ obeys
\begin{equation}
\label{eq:eqk12} \int_0^\infty sk(s)\,ds=\infty.
\end{equation}
In this case it is only known that the differential resolvent $r$ satisfies
\begin{equation} \label{eq:eqrnotl1}
\lim_{t\to\infty} r(t)=0, \quad r\not\in
L^1((0,\infty);(0,\infty)).
\end{equation}
%We now ask that the function
%\begin{equation}
%\label{eq_def_lambda}
%\lambda(t):= \int_t^\infty k(s) d s\end{equation}
%obeys (C1)-(C3)
\begin{theorem} \label{thm:explicitasymptotics}
Suppose that $k$ obeys (C1)--(C3). % is completely monotone. Suppose further that there exists $\alpha\in(0,1)$ such that $k\in \text{RV}_\infty(-1-\alpha)$.
 If $r$ is
the unique continuous solution of \eqref{eq.fund1}, then
\begin{equation} \label{r_lim}
\lim_{t\to\infty} r(t)t^{1-\alpha}L(t)=\frac{\sin \alpha \pi}{\pi}.
\end{equation}
\end{theorem}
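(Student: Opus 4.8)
The plan is to pass to the Laplace transform, extract the $z\to 0^+$ behaviour, and then invert by a Karamata Tauberian argument. Write $\hat f(z)=\int_0^\infty e^{-zt}f(t)\,dt$. Transforming the resolvent equation \eqref{eq.fund1} gives $z\hat r(z)-1=(a+\hat k(z))\hat r(z)$, so that $\hat r(z)=(z-a-\hat k(z))^{-1}$. The first step is to rewrite the denominator using the two standing hypotheses $a=-\int_0^\infty k(s)\,ds=-\lambda(0)$ and $k=-\lambda'$. An integration by parts yields $\hat k(z)=\lambda(0)-z\hat\lambda(z)$, whence $z-a-\hat k(z)=z+\lambda(0)-\hat k(z)=z(1+\hat\lambda(z))$, and therefore
\[
\hat r(z)=\frac{1}{z\,(1+\hat\lambda(z))}.
\]
This clean identity is the engine of the whole argument; it already explains structurally why the infinite first moment \eqref{eq:eqk12} (i.e. $\hat\lambda(0^+)=\infty$) forces the non-integrability in \eqref{eq:eqrnotl1}. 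It is also equivalent to the renewal-type integral equation $r(t)=1-\int_0^t r(t-s)\lambda(s)\,ds$, obtained by inverting $\widehat{r'}(z)=-z\,\widehat{r*\lambda}(z)$.

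Next I would determine the behaviour of the right-hand side as $z\downarrow 0$. By (C3), $\lambda\in\mbox{RV}_\infty(-\alpha)$ with $0<\alpha<1$, so the Abelian half of Karamata's theorem gives $\hat\lambda(z)\sim\Gamma(1-\alpha)\,z^{\alpha-1}L(1/z)$ as $z\to 0^+$; in particular $\hat\lambda(z)\to\infty$, so $1+\hat\lambda(z)\sim\hat\lambda(z)$ and
\[
\hat r(z)\sim\frac{z^{-\alpha}}{\Gamma(1-\alpha)\,L(1/z)},\qquad z\to 0^+.
\]
Writing $\ell(t):=1/(\Gamma(1-\alpha)L(t))$, which is slowly varying, this reads $\hat r(z)\sim z^{-\alpha}\ell(1/z)$. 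Applying Karamata's Tauberian theorem to the nondecreasing function $U(t):=\int_0^t r(s)\,ds$ (whose Laplace--Stieltjes transform is $\hat r$) then yields
\[
U(t)=\int_0^t r(s)\,ds\sim\frac{t^\alpha}{\Gamma(1+\alpha)}\,\ell(t)=\frac{t^\alpha}{\Gamma(1+\alpha)\Gamma(1-\alpha)\,L(t)},\qquad t\to\infty.
\]

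To descend from the integrated asymptotics to the pointwise statement \eqref{r_lim} I would invoke the monotone density theorem: if $r$ is eventually monotone and $U$ is regularly varying of positive index $\alpha$, then $r(t)\sim\alpha\,U(t)/t$. This gives $r(t)\sim\alpha\,t^{\alpha-1}/(\Gamma(1+\alpha)\Gamma(1-\alpha)L(t))=t^{\alpha-1}/(\Gamma(\alpha)\Gamma(1-\alpha)L(t))$, and the reflection formula $\Gamma(\alpha)\Gamma(1-\alpha)=\pi/\sin(\pi\alpha)$ collapses the constant to exactly $\sin(\alpha\pi)/\pi$, which is \eqref{r_lim}.

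The main obstacle is the hypothesis required for this last step: the monotone density theorem needs $r$ to be ultimately monotone, and this is precisely where the log-convexity condition (C2) (equivalently (C2*)) must enter, since the Abelian and Tauberian machinery above uses only (C1) and (C3). I would therefore establish, as a separate lemma, that under (C1)--(C2) the resolvent $r$ is nonnegative and eventually nonincreasing — the continuous counterpart of the Kaluza/Garsia--Lamperti--Isaac phenomenon invoked in the discrete theory. The natural handle is the renewal equation $r=1-\lambda*r$ derived above: log-convexity of $\lambda$ should be leveraged to control the signs of $r$ and $r'$ for large $t$, analogously to how the Kaluza condition forces positivity and monotonicity of renewal sequences. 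Verifying this monotonicity rigorously, rather than the Tauberian computation, is the delicate part of the proof.
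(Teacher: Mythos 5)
Your proposal follows exactly the route the paper takes: the Laplace--transform identity $\hat r(z)=1/\bigl(z(1+\hat\lambda(z))\bigr)$ (equivalently the renewal equation $r+\lambda\ast r=1$, which is the paper's \eqref{eq:eqrlambda}), Karamata's Abelian theorem for $\hat\lambda$, the Karamata/Feller Tauberian theorem applied to $U(t)=\int_0^t r(s)\,ds$, and a monotone--density step to descend to $r$ itself; your constant bookkeeping, ending with the reflection formula and the value $\sin(\alpha\pi)/\pi$, is correct. However, the step you leave open --- that $r$ is nonnegative and (eventually) nonincreasing --- is a genuine gap, not a formality: without it Feller's Theorem XIII.5.4 (the form of the monotone density theorem the paper cites) cannot be invoked, and regular variation of $U$ alone says nothing pointwise about $r$. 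You correctly diagnose that this is where (C2) must enter, but "log-convexity of $\lambda$ should be leveraged to control the signs of $r$ and $r'$ for large $t$" is a conjecture, not an argument, and the sign analysis of $r'$ from the renewal equation is not straightforward to carry out directly.

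The paper closes this gap with a short, specific device worth knowing: differentiating the identity $r+\lambda\ast r=1$ shows that $\rho:=-r'$ satisfies
\begin{equation*}
\rho(t)=\lambda(t)-\int_0^t\lambda(t-s)\rho(s)\,ds,
\end{equation*}
i.e.\ $\rho$ is itself the \emph{integral resolvent} of the kernel $\lambda$. Since $\lambda$ is nonincreasing and log-convex by (C2), Theorem 1.2 of Levin \cite{Levin} applies to this resolvent equation and yields $0\le\rho(t)\le\lambda(t)$ for all $t>0$ together with $\int_0^\infty\rho(t)\,dt=1$; hence $r$ is nonincreasing on all of $[0,\infty)$ (not merely eventually) with $0\le r\le 1$, which is precisely the hypothesis your monotone--density step needs. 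So the resolution is not an asymptotic sign estimate for large $t$, but an appeal to the positivity theory of resolvents of log-convex (completely positive) kernels --- the continuous analogue of the Kaluza-sequence argument you allude to, and the one idea your proposal is missing.
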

Hence for $\alpha\in (0,1/2)$ we have $r\in L^2([0,\infty);(0,\infty))$ but $r\not \in L^1([0,\infty);(0,\infty))$ due to $r\in \text{RV}_\infty(\mu)$ for
$\mu=\alpha-1\in(-1,-1/2)$.
\begin{proof}
%We now show that imposing some further restrictions on $k$ ensures that $r$ obeys \eqref{eq:eqrto0rnotinl2}.
%Since $k\in
%L^1(0,\infty)$, we may define
%\[
%\lambda(t)=\int_t^\infty k(s)\,ds.
%\]
We note that $\lambda\in C^1((0,\infty);(0,\infty))$. Evidently $\lambda$ is positive, non--increasing, satisfies
$\lambda(t)\to0$ as $t\to\infty$. Though, by virtue of
(C3) this happens so slowly that $\lambda\not\in
L^1([0,\infty);\mathbb{R})$.\\
%By \eqref{eq:kcompmon}
%\begin{equation}\label{eq:eqlam1}
%\text{$\lambda$ is completely monotone.}
%\end{equation}
%Also, by \eqref{eq:krv}
%\begin{equation}
%\label{eq:eqlam2} \text{There is
%$\alpha\in (0,1)$ such that $\lambda\in \text{RV}_\infty(-\alpha)$.}
%\end{equation}
Since $r\in C^1((0,\infty);(0,\infty))$, we can also introduce the function
$\rho=-r'$.

By differentiation of the function $f(t)=r(t)+\int_0^t
\lambda(t-s)r(s)\,ds$, and using \eqref{eq.fund1}, we see that
$f'(t)=0$. Since $f(0)=r(0)=1$, we have
\begin{equation}\label{eq:eqrlambda}
r(t)+\int_0^t \lambda(t-s)r(s)\,ds=1, \quad t\geq 0.
\end{equation}
Therefore,
\begin{align*}
\rho(t)&=-r'(t)=\frac{d}{dt}\left(-1+\int_0^t
\lambda(s)r(t-s)\,ds\right)\\
&=\int_0^t \lambda(s)r'(t-s)\,ds +
\lambda(t)r(0)=\lambda(t)-\int_0^t \lambda(t-s)\rho(s)\,ds.
\end{align*}
Hence $\rho$ is the integral resolvent of $\lambda$.
Now by (C2) and  and Theorem 1.2 in \cite{Levin}, it follows that
\begin{equation} 0\le \rho(t)\le \lambda(t)\quad \mbox{for all $t>0$}, \quad \int_0^\infty \rho(t)d t =1,\label{eq:rho}\end{equation}
particularly implying  $0\le r(t)\le 1$ for all $t\ge 0$.
%Now, by \eqref{eq:eqlam1} and Theorem 5.3.1 in~\cite{GrLoSt90}, it follows that $\rho$ is also completely monotone.
%Therefore $\rho(t)\geq 0$ for all $t\geq0$. Moreover, Theorem 5.3.1 in~\cite{GrLoSt90} also ensures
%that $\rho\in L^1([0,\infty),\mathbb{R})$.
Since $\lambda(t)\ge0$, we may define a measure $\Lambda$ by
$\Lambda([0,t])=\int_0^t \lambda(s)\,ds$. Then
\[
\omega_{\Lambda}(z):=\int_0^\infty
e^{-zt}\Lambda(dt)=\hat{\lambda}(z).
\]
By (C3), it follows that $\Lambda\in \text{RV}_\infty(1-\alpha)$, so as $1-\alpha>0$,
we can apply Theorem XIII.5.1 in~\cite{Feller:1} to get
\begin{equation}\label{eq:eqlambdaasy}
\hat{\lambda}(\tau)=\omega_{\Lambda}(\tau)\sim
\Gamma(-\alpha+2)\Lambda(1/\tau), \quad\text{as $\tau\to0$}.
\end{equation}
Next, as $r(t)>0$ for all $t\geq 0$, we may define the measure $U$
by $U([0,t])=\int_0^t r(s)\,ds$. Then $u(t):=U'(t)=r(t)$ obeys
$u'(t)=r'(t)=-\rho(t)\leq0$ for all $t\geq0$. Furthermore
\[
\omega_{U}(z):=\int_0^\infty e^{-zt}U(dt)=\int_0^\infty
e^{-zt}r(t)\,dt = \hat{r}(z).
\]
Since $\lambda(t)\to0$ and $r(t)\to0$ as $t\to\infty$, $\hat{\lambda}(z)$ and $\hat{r}(z)$ exist for
$\Re(z)>0$. Therefore, by \eqref{eq:eqrlambda}, we have
\[
\hat{r}(z)+\hat{\lambda}(z)\hat{r}(z)=\frac{1}{z}, \quad \Re(z)>0.
\]
Therefore, for $\tau>0$,
\[
\omega_U(\tau)=\hat{r}(\tau)=\frac{1}{\tau+\tau\hat{\lambda}(\tau)}.
\]
Now, by \eqref{eq:eqlambdaasy}
\[
\tau\hat{\lambda}(\tau)\sim\Gamma(-\alpha+2)\tau\Lambda(1/\tau),
\quad\text{as $\tau\to0$}.
\]
Because $\Lambda\in \text{RV}_\infty(-\alpha+1)$,
$\Lambda_1(\tau):=\tau\Lambda(1/\tau)$ obeys $\Lambda_1\in
\text{RV}_0(\alpha)$. Since $\alpha\in(0,1)$,
$\tau+\tau\hat{\lambda}(\tau)\sim
\Gamma(2-\alpha)\Lambda_1(\tau)=\Gamma(2-\alpha)\tau\Lambda(1/\tau)$
as $\tau\to0$. Thus
\begin{equation}\label{eq:omegauasy}
\omega_U(\tau)=\frac{1}{\tau+\tau\hat{\lambda}(\tau)}\sim\frac{1}{\Gamma(2-\alpha)\tau\Lambda(1/\tau)}
=\frac{1}{\tau^\alpha}L(1/\tau),\quad \text{as $\tau\to0$},
\end{equation}
where
\[
L(1/\tau)=\frac{1}{\Gamma(2-\alpha)}\frac{\tau^{\alpha-1}}{\Lambda(1/\tau)},
\]
which is a slowly varying function by virtue of the fact that
$\Lambda\in \text{RV}_\infty(-\alpha+1)$. Then, as $U$ has a monotone
derivative $u$, and \eqref{eq:omegauasy} holds, Theorem XIII.5.4
in~\cite{Feller:1} implies that
\[
u(t)\sim \frac{1}{\Gamma(\alpha)}t^{\alpha-1}L(t), \quad\text{as
$t\to\infty$}.
\]
Since $u(t)=r(t)$, by the definition of $L$
\[
r(t)\sim \frac{1}{\Gamma(\alpha)}t^{\alpha-1}
\cdot\frac{1}{\Gamma(2-\alpha)}\frac{t^{-\alpha+1}}{\Lambda(t)}
=\frac{1}{\Gamma(\alpha)\Gamma(2-\alpha)}\frac{1}{\Lambda(t)},
\quad \text{as $t\to\infty$}.
\]
Moreover, we have from Proposition 1.5.8 in \cite{Goldie}, that
\[ \Lambda(t)=\int_0^t s^{-\alpha} L(s)\,ds\sim \frac{1}{1-\alpha}t^{1-\alpha}L(t), \quad \mbox{as } t\to \infty.\]
Hence,
\[
\lim_{t\to\infty} r(t)t^{1-\alpha}L(t)=\frac{1-\alpha}{\Gamma(\alpha)\Gamma(2-\alpha)}=\frac{\sin \alpha \pi}{\pi},
\]
as required.
\end{proof}
For the sake of completeness, we also study the case where $\lambda$, defined as in \eqref{def_lambda}, satisfies $\lambda\in \text{RV}_\infty(-\alpha)$ with  $\alpha>1$. It turns out that in this case  $r$ converges to a positive limit and hence  cannot be asymptotically stable.
\begin{corollary}
Suppose that $k$ satisfies (C1) and (C3) with $\alpha>1$ and that $a+\int_0^\infty k(s)\, ds=0$ holds true. Then,
$ \int_0^\infty sk(s)\, ds<\infty$ and
\begin{equation} \label{r_nonzero_limit}\lim_{t \to \infty} r(t)=\left(1+\int_0^\infty sk(s)\, ds\right)^{-1}.\end{equation}
\end{corollary}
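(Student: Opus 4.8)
The plan is to reduce the statement to the renewal identity already obtained in the proof of Theorem~\ref{thm:explicitasymptotics} together with a Paley--Wiener argument, the only new input being that $\alpha>1$ now forces $\lambda$ to be integrable. First I would settle the finiteness of the first moment. By Fubini's theorem,
\[
\int_0^\infty s\,k(s)\,ds=\int_0^\infty\Big(\int_0^s\,dt\Big)k(s)\,ds=\int_0^\infty\lambda(t)\,dt .
\]
Since (C3) with $\alpha>1$ gives $\lambda\in\text{RV}_\infty(-\alpha)$ with index $-\alpha<-1$, Karamata's theorem (see \cite{Goldie}) yields $\lambda\in L^1([0,\infty);\mathbb{R})$, so that $m_1:=\int_0^\infty s\,k(s)\,ds=\int_0^\infty\lambda(t)\,dt<\infty$, which is the first assertion.

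Next I would observe that the identity \eqref{eq:eqrlambda}, $r(t)+\int_0^t\lambda(t-s)r(s)\,ds=1$, and the fact that $\rho:=-r'$ is the integral resolvent of $\lambda$, i.e.\ $\rho(t)+\int_0^t\lambda(t-s)\rho(s)\,ds=\lambda(t)$, both remain valid here, since their derivation used only (C1) and $a+\int_0^\infty k(s)\,ds=0$. Integrating $r'=-\rho$ from $0$ gives
\[
r(t)=1-\int_0^t\rho(s)\,ds,\qquad t\ge0 .
\]
Consequently, if I can show $\rho\in L^1([0,\infty);\mathbb{R})$, then $\int_0^t\rho\to\int_0^\infty\rho$, the limit $\lim_{t\to\infty}r(t)=1-\int_0^\infty\rho(s)\,ds$ exists, and it only remains to evaluate $\int_0^\infty\rho$.

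The main obstacle is thus proving $\rho\in L^1$. Since $\lambda\in L^1$, by the Paley--Wiener theorem for Volterra convolution equations it suffices to verify $1+\hat\lambda(z)\ne0$ for all $z$ with $\Re z\ge0$. At $z=0$ this is clear, as $1+\hat\lambda(0)=1+m_1>0$. For $z\ne0$ I would use $z\hat\lambda(z)=\lambda(0)-\hat k(z)$ (from $\lambda'=-k$ and $\lambda(0)=\int_0^\infty k=-a$), which rearranges to $z\bigl(1+\hat\lambda(z)\bigr)=z-a-\hat k(z)$; hence a zero would force $z=a+\hat k(z)$. Taking real parts and using $k>0$ and $\Re z\ge0$,
\[
\Re\hat k(z)=\int_0^\infty e^{-(\Re z)t}\cos\bigl((\Im z)t\bigr)k(t)\,dt\le\int_0^\infty k(t)\,dt=-a ,
\]
so $\Re z=a+\Re\hat k(z)\le0$, forcing $\Re z=0$ with equality throughout. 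Equality then gives $\int_0^\infty\bigl(1-\cos((\Im z)t)\bigr)k(t)\,dt=0$, and since $k$ is continuous and strictly positive by (C1) this forces $\Im z=0$, i.e.\ $z=0$, a contradiction. Hence $1+\hat\lambda$ has no zeros in $\{\Re z\ge0\}$ and $\rho\in L^1$.

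Finally, transforming the resolvent equation gives $\hat\rho(z)=\hat\lambda(z)/\bigl(1+\hat\lambda(z)\bigr)$; letting $z\to0$ and using $\hat\lambda(0)=m_1$ (dominated convergence, $\lambda\in L^1$) yields $\int_0^\infty\rho=\hat\rho(0)=m_1/(1+m_1)$. Therefore
\[
\lim_{t\to\infty}r(t)=1-\frac{m_1}{1+m_1}=\frac{1}{1+m_1}=\Big(1+\int_0^\infty s\,k(s)\,ds\Big)^{-1},
\]
which is \eqref{r_nonzero_limit}; the limit being strictly positive confirms that $r$ is not asymptotically stable. I expect the verification of the Paley--Wiener zero condition to be the only delicate point, the remaining steps being routine regular-variation and Laplace-transform computations.
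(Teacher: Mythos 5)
Your proof is correct, but it takes a genuinely different route from the paper's. The paper's own proof is a short reduction: it checks, exactly as you do, that $\lambda$ is continuous and integrable (regular variation with index $-\alpha<-1$, plus Fubini giving $\int_0^\infty \lambda(s)\,ds=\int_0^\infty sk(s)\,ds<\infty$), and then simply invokes Theorem 4.2 of Appleby--Reynolds \cite{Ap_Rey_sub}, which yields \eqref{r_nonzero_limit} directly for Volterra resolvents whose kernel has integrable tail integral. You instead re-derive that limit from scratch: you reuse the identity \eqref{eq:eqrlambda} and the resolvent equation for $\rho=-r'$ (correctly noting that their derivation requires only (C1) and $a+\int_0^\infty k(s)\,ds=0$, not the log-convexity (C2), which indeed is not assumed in the corollary), prove $\rho\in L^1([0,\infty);\mathbb{R})$ via the Paley--Wiener theorem by verifying that $1+\hat\lambda$ has no zeros in the closed right half-plane, and then compute $\int_0^\infty\rho(s)\,ds=m_1/(1+m_1)$ by letting $z\to 0$ in $\hat\rho=\hat\lambda/(1+\hat\lambda)$. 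Your zero-freeness argument (reducing a putative zero to $z=a+\hat k(z)$, taking real parts, and using strict positivity and continuity of $k$ to exclude purely imaginary zeros) is sound, and it is essentially the standard proof underlying the result the paper cites. So your approach buys self-containedness -- the reader needs only the classical Paley--Wiener theorem rather than a specialised Volterra paper -- at the cost of length, while the paper's approach buys brevity by outsourcing precisely this work to \cite{Ap_Rey_sub}.
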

\begin{proof}
Since $\lambda$ is continuous satisfying $\lambda(0)=\int_0^\infty k(s)\, ds<\infty$ and $\lambda\in \text{RV}_\infty(-\alpha)$ with $\alpha>1$, we also have $ \lambda\in L^1([0, \infty);(0, \infty))\cap C([0, \infty); (0, \infty))$. Moreover
\[ \int_0^\infty \lambda(s)\, ds= \int_0^\infty sk(s)\, ds<\infty.\]
Then, Theorem 4.2 in \cite{Ap_Rey_sub} yields \eqref{r_nonzero_limit}.
\end{proof}
%Now, we give an example of when this Theorem applies.
%\begin{example} \label{ex1}
%Let $\alpha\in (0,1)$ and
%\begin{equation}
%k(t)=\frac{1}{(1+t)^{\alpha+1}}, \quad t\geq0. \label{eq:concrete_k}
%\end{equation}
%The kernel $k$ is obviously completely monotone and belongs to $\mbox{RV}_\infty(-1-\alpha)$. Hence, it satisfies (C1)--(C3) and  Theorem~\ref{thm:explicitasymptotics} yields
%\begin{equation} \label{lim_concrete}
%\lim_{t\to\infty} r(t) t^{1-\alpha} =\frac{\alpha \sin \alpha \pi}{\pi}>0.
%\end{equation}
%\end{example}
%
\subsection{Asymptotic behaviour of the autocovariance function} \label{sec:main}
In this section we state our second main result, Theorem
\ref{theorem.mainresult}, which characterizes completely the
asymptotic rate of convergence of the autocovariance function $c(t)$
of the solution of \eqref{eq.stochstat} for the case when
$a=-\int_0^\infty k(s)\,ds$. In the case where $0<\alpha<1/2$, it
turns out that for the kernels $k$ satisfying (C1)--(C3),
 $c(t)$ resembles the  power law function $t^{2\alpha-1}$ for large values of $t$ and hence exhibits long memory.
 The case where $\alpha=1/2$ is more subtle; indeed, for some such $k$ we have $r\not\in L^2([0,\infty);\mathbb{R})$.
 If $r\in L^2([0,\infty);\mathbb{R})$, it is  still possible to determine the rate of decay of $c$, which continues to exhibit long memory.
 Perhaps the most interesting aspect of this result is that arbitrarily slow rates of decay of $c$
in  $\text{RV}_\infty(0)$ can be obtained.
\begin{theorem} \label{theorem.mainresult}
Suppose that $k$ satisfies (C1)--(C3) with $\alpha\in(0,1/2)$.
Let $r$ be the solution of \eqref{eq.fund1}. %\begin{equation}  \label{deterministic}
%r'(t)=-\int_0^\infty k(u)\,du \,r(t)+\int_0^t k(t-s)r(s)\,ds, \quad
%t>0;\quad r(0)=1.
%\end{equation}
Let $\sigma\in\mathbb{R}\setminus\{0\}$
and $B=\{B(t):t\in\mathbb{R}\}$ be the standard one--dimensional Brownian motion defined by
\eqref{eq.whollineBM}. Then there is a unique stationary Gaussian process $X$ which obeys \eqref{eq.stochstat}:
\begin{align*}\begin{split}
dX(t) &= \left( a
X(t)+\int_{0}^\infty k(s)X(t-s)\,ds\right)\,dt + \sigma\,dB(t), \quad t>0;\\
X(t) &= \int_{-\infty}^t r(t-s)\sigma \,dB(s), \quad t\leq 0.
\end{split}\end{align*}
%$X$ is a stationary zero mean Gaussian process with covariance function given by
%\begin{equation}\label{defn_c}
%c(t)= \Cov (X(s),X(s+t))=\sigma^2\int_0^\infty r(s)r(s+t)\,ds.
%\end{equation}
The autocovariance function $c(\cdot)=\Cov(X(s),X(s+\cdot))$ satisfies
%\[
%\lim_{t\to\infty} \frac{c(t)}{tr^2(t)} = \sigma^2\int_0^\infty x^{\alpha-1} (1+x)^{\alpha-1}\,dx.
%\]
\begin{equation} \label{eq:asymptotocs_c}
\lim_{t\to\infty} c(t) L^2(t)t^{1-2\alpha}
= \sigma^2\frac{\Gamma(1-2\alpha)\Gamma(\alpha)}{\Gamma(1-\alpha)}\cdot \frac{ \sin^2(\pi \alpha)}{\pi^2}.
%\lim_{t\to\infty} \frac{c(t)}{t^{-3}/k^2(t)}
%= \sigma^2\frac{\sin(\alpha \pi) \Gamma(1-2\alpha)}{\pi \Gamma(-\alpha)^2}.
\end{equation}
\end{theorem}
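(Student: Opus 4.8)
The plan is to reduce the whole statement to the asymptotics of the integral $c(t)=\sigma^2\int_0^\infty r(s)r(s+t)\,ds$. First I would observe that since $\alpha\in(0,1/2)$, Theorem~\ref{thm:explicitasymptotics} gives $r\in\text{RV}_\infty(\alpha-1)$ with $\alpha-1\in(-1,-1/2)$, so $r\in L^2([0,\infty);\mathbb{R})\setminus L^1([0,\infty);\mathbb{R})$. Hence Theorem~\ref{thm.existence covarfn stat} applies and yields both the unique stationary process $X$ solving \eqref{eq.stochstat} and the representation $c(t)=\sigma^2\int_0^\infty r(s)r(s+t)\,ds$. Writing $C:=\sin(\alpha\pi)/\pi$ and $\phi(s):=r(s)s^{1-\alpha}L(s)$, Theorem~\ref{thm:explicitasymptotics} is precisely the statement $\phi(s)\to C$ as $s\to\infty$.

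Second, I would rescale $s=tu$ to get $c(t)=\sigma^2 t\int_0^\infty r(tu)r(t(u+1))\,du$. Multiplying by the conjectured normalisation $t^{1-2\alpha}L(t)^2$ and substituting $r(s)=\phi(s)s^{\alpha-1}/L(s)$, all the algebraic powers of $t$ cancel, leaving
\[ c(t)t^{1-2\alpha}L(t)^2=\sigma^2\int_0^\infty \phi(tu)\phi(t(u+1))\,u^{\alpha-1}(u+1)^{\alpha-1}\,\frac{L(t)^2}{L(tu)L(t(u+1))}\,du. \]
For each fixed $u>0$ the integrand tends to $C^2u^{\alpha-1}(u+1)^{\alpha-1}$, using $\phi(tu)\to C$, $\phi(t(u+1))\to C$ and, by the uniform convergence theorem for slowly varying functions, $L(tu)/L(t)\to1$ and $L(t(u+1))/L(t)\to1$. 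On any compact $[\epsilon,A]\subset(0,\infty)$ this convergence is uniform, so the truncated integral converges to $C^2\int_\epsilon^A u^{\alpha-1}(u+1)^{\alpha-1}\,du$.

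Third, and this is the main obstacle, I would control the two tails $u<\epsilon$ and $u>A$ uniformly in large $t$. Here I would exploit that $r$ is non-increasing (from $r'=-\rho\le 0$, established in the proof of Theorem~\ref{thm:explicitasymptotics}) together with Karamata's theorem (cf.~\cite{Goldie}). For $u<\epsilon$, bounding $r(t(u+1))\le r(t)$ gives a contribution controlled by $r(t)\cdot\tfrac1t\int_0^{t\epsilon}r(v)\,dv$; since $\int_0^x r\in\text{RV}_\infty(\alpha)$ with $\int_0^x r\sim (C/\alpha)x^\alpha/L(x)$, the normalised tail is $O(\epsilon^\alpha)$ uniformly in $t$. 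For $u>A$, bounding $r(t(u+1))\le r(tu)$ reduces matters to $\tfrac1t\int_{tA}^\infty r(v)^2\,dv$; since $r^2\in\text{RV}_\infty(2\alpha-2)$ with $2\alpha-2<-1$, Karamata's theorem for tail integrals gives a normalised bound $O(A^{2\alpha-1})$, which vanishes as $A\to\infty$ because $2\alpha-1<0$. A standard $\epsilon\to0$, $A\to\infty$ argument then yields
\[ \lim_{t\to\infty}c(t)t^{1-2\alpha}L(t)^2=\sigma^2 C^2\int_0^\infty u^{\alpha-1}(u+1)^{\alpha-1}\,du. \]

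Finally, I would evaluate the limiting integral. Rewriting it as $\int_0^\infty u^{\alpha-1}(1+u)^{-(1-\alpha)}\,du$ identifies it as the Beta integral $B(\alpha,1-2\alpha)=\Gamma(\alpha)\Gamma(1-2\alpha)/\Gamma(1-\alpha)$, which converges exactly because $\alpha>0$ (integrability at $0$) and $\alpha<1/2$ (integrability at $\infty$) — precisely the hypothesis $\alpha\in(0,1/2)$. Combined with $C^2=\sin^2(\pi\alpha)/\pi^2$, this produces the asserted constant $\sigma^2\,\Gamma(1-2\alpha)\Gamma(\alpha)\Gamma(1-\alpha)^{-1}\sin^2(\pi\alpha)/\pi^2$. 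I expect the only genuinely delicate point to be the uniform tail estimates; the monotonicity of $r$ makes these elementary and lets me avoid appealing to Potter's bounds.
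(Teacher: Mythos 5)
Your proof is correct, and its skeleton is the same as the paper's: reduce via Theorem \ref{thm.existence covarfn stat} to the asymptotics of $\sigma^2\int_0^\infty r(s)r(s+t)\,ds$, rescale $s=tu$, and control the integral using the uniform convergence theorem for regular variation together with Karamata's theorem, ending with the Beta integral $B(\alpha,1-2\alpha)=\Gamma(\alpha)\Gamma(1-2\alpha)/\Gamma(1-\alpha)$. The difference is structural. The paper factors the analytic core into a standalone result, Theorem \ref{theorem.acvfasymptotics}: for \emph{any} positive continuous $r\in\text{RV}_\infty(\mu)$ with $\mu\in(-1,-1/2)$ one has $c(t)/(tr^2(t))\to\Gamma(-1-2\mu)\Gamma(1+\mu)/\Gamma(-\mu)$; since such an $r$ need not be monotone, the paper first treats decreasing $r$ and then passes to the general case via the decreasing envelope $t\mapsto\sup\{r(x):x\ge t\}$, which is asymptotic to $r$ by regular variation with negative index. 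You instead exploit that the particular resolvent at hand \emph{is} non-increasing --- legitimately, since $-r'$ is the integral resolvent of $\lambda$ and hence non-negative by \eqref{eq:rho} in the proof of Theorem \ref{thm:explicitasymptotics} --- so the envelope step disappears and your argument is self-contained. Two minor variations: you split into three regions and handle $u>A$ by monotonicity plus Karamata's tail theorem (using $2\alpha-2<-1$), where the paper handles all of $[\delta,\infty)$ at once by dominated convergence with dominating function $2x^\mu(x+1)^\mu$; and you evaluate the limiting constant explicitly as a Beta integral, which the paper asserts without computation. What the paper's extra generality buys is reusability: Theorem \ref{theorem.acvfasymptotics} is invoked again, for the function $x\mapsto r_{[x]}$, in the proof of the discrete analogue (Theorem \ref{theorem.mainresult_discrete}), where your shortcut would need a separate monotonicity argument. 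What your route buys is a shorter, more elementary proof of the continuous-time theorem itself.
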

%The proof of the theorem is presented in Section \ref{proof}.
%\begin{remark}
%Under the assumptions of Theorem \ref{theorem.mainresult}, \[
%\int_0^t c(s)\,ds \to \infty, \quad \mbox{for } t\to \infty.\]
%Moreover, if  $L$  satisfies $\lim_{t\to \infty} L(t)=\gamma$ with some constant $\gamma$ then
%\[\lim_{t\to\infty} c(t)t^{1-2\alpha}
%= \sigma^2\frac{\Gamma(1-2\alpha)\Gamma(\alpha)}{\gamma^2\Gamma(1-\alpha)}\cdot \frac{ \sin^2(\pi \alpha)}{\pi^2}.
%\]
%\end{remark}
\begin{proof}
%Let $\beta=(1-2\alpha)/2>0$ Then Proposition 1.3.6(v) in \cite{Goldie} yields that $t^\beta /L(t)\to \infty$ for $t\to \infty$. Both claims now follow directly from the equation \eqref{eq:asymptotocs_c}.
The proof of the theorem can be found in Section \ref{proof}.
\end{proof}
\begin{example}Let $\alpha\in (0,1/2)$ and \label{ex1}
\begin{equation}
k(t)=\frac{1}{(1+t)^{\alpha+1}}, \quad t\geq0.
\end{equation}
Then, $\lambda(t)=1/(\alpha (1+t)^\alpha)$, $t\ge 0$, and since $L(t)\to 1/\alpha$ as $t\to \infty$, we obtain the following convergence rate of the autocovariance function:
\[ \lim_{t\to \infty}\frac{c(t)}{t^{2\alpha-1}}=\sigma^2\frac{\sin(\alpha \pi) \Gamma(1-2\alpha)}{\pi \Gamma(-\alpha)^2}.\]
%Since $2\alpha-1\in (-1, 0)$ the solution exhibits
\end{example}

We now consider the interesting and critical case where $
\alpha=1/2$. Depending on the properties of the slowly varying
function $L$, both $r\not \in L^2([0, \infty);\mathbb{R})$ as well
as $r\in L^2([0, \infty);\mathbb{R})$ is possible. %In the latter
%case, we can achieve arbitrary slow decay rates of the
%autocovariance function. 
We first determine the rate of convergence
of the autocovariance function.
\begin{theorem}\label{thm:half}
Suppose that $k$ satisfies (C1), (C2) and $k(t)=L(t)t^{-3/2}$, $t\ge
0$, with a slowly varying function $L$. Then, $r\in L^2([0,
\infty);\mathbb{R})$ if and only if
\begin{equation}\label{eq.limit_finite}
\int_1^\infty \frac{1}{tL(t)^2}\, dt<\infty.\end{equation} Moreover,
if \eqref{eq.limit_finite} holds true, then
\[ c(t)\sim\frac{\sigma^2}{\pi^2} \int_t^\infty \frac{1}{sL(s)^2}\, ds, \quad t\to \infty.\]
%Particularly, $L(t)^2 c(t)\to \infty$ holds for $t\to \infty$.
\end{theorem}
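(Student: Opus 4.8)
The plan is to reduce the $\alpha=1/2$ case to the machinery already developed for the resolvent $r$ and the covariance representation \eqref{eq.statcovar}. First I would analyze the resolvent asymptotics exactly as in the proof of Theorem~\ref{thm:explicitasymptotics}. The algebraic identities $r(t)+\int_0^t\lambda(t-s)r(s)\,ds=1$ and the Laplace-transform relation $\hat r(\tau)=1/(\tau+\tau\hat\lambda(\tau))$ require no change, since they used only (C1) and (C2). With $\alpha=1/2$ and $k(t)=L(t)t^{-3/2}$, Karamata's theorem gives $\lambda(t)=\int_t^\infty k(s)\,ds\sim 2L(t)t^{-1/2}$, so $\lambda\in\mathrm{RV}_\infty(-1/2)$ and $\Lambda(t)=\int_0^t\lambda\in\mathrm{RV}_\infty(1/2)$. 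Applying Feller's Tauberian theorems (Theorem XIII.5.1 and XIII.5.4 in \cite{Feller:1}) just as before yields $r(t)\sim \frac{1}{\Gamma(1/2)\Gamma(3/2)}\,\frac{1}{\Lambda(t)}$, hence $r\in\mathrm{RV}_\infty(-1/2)$ with $r(t)\sim \pi^{-1} t^{-1/2}/L(t)$ after simplifying the Gamma factors via $\Gamma(1/2)\Gamma(3/2)=\pi/2$ and $\Lambda(t)\sim 2t^{1/2}L(t)$.

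Next I would settle the $L^2$ dichotomy. Since $r$ is regularly varying of index $-1/2$ with $r(t)^2\sim \pi^{-2}\,t^{-1}/L(t)^2$, the integral $\int_1^\infty r(t)^2\,dt$ converges if and only if $\int_1^\infty t^{-1}L(t)^{-2}\,dt<\infty$, which is precisely \eqref{eq.limit_finite}; this is a direct comparison using that the asymptotic equivalence of nonnegative regularly varying functions transfers integrability. This gives the stated iff criterion and, when it holds, places us in the setting of Theorem~\ref{thm.existence covarfn stat}, so the stationary covariance representation \eqref{eq.statcovar}, $c(t)=\sigma^2\int_0^\infty r(s)r(s+t)\,ds$, is available.

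The main work, and the hardest step, is extracting the asymptotics of $c(t)=\sigma^2\int_0^\infty r(s)r(s+t)\,ds$ from the pointwise asymptotics of $r$. The difficulty is that the integrand is not dominated by a single scale: for $s$ comparable to or larger than $t$ one has $r(s)r(s+t)\sim \pi^{-2}s^{-1/2}(s+t)^{-1/2}/(L(s)L(s+t))$, and because $L$ is merely slowly varying the naive substitution $s=tu$ does not immediately factor out a clean power of $t$. I would handle this by a uniform-convergence (Potter-type) argument for regularly varying functions: write $c(t)/\sigma^2=\int_0^\infty r(s)r(s+t)\,ds$, split the range at $s=t$ (or at $\epsilon t$ and $t/\epsilon$), and on the tail use $r(s+t)\sim r(s)$-type comparisons together with $L(s)\sim L(t)$ for $s$ in bounded multiples of $t$. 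The key heuristic is that the integral is dominated by the region $s\gtrsim t$, where the integrand behaves like $\pi^{-2}s^{-1}/L(s)^2$, giving $c(t)\sim \sigma^2\pi^{-2}\int_t^\infty s^{-1}L(s)^{-2}\,ds$; here the condition \eqref{eq.limit_finite} guarantees this tail is finite and, being itself slowly varying and dominating the contribution from $s\ll t$, captures the full asymptotics.

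To make the last point rigorous I would control the three contributions separately: the region $s\in[0,\epsilon t]$ contributes $O\!\big(r(t)\int_0^{\epsilon t} r(s)\,ds\big)$, which one checks is negligible relative to $\int_t^\infty s^{-1}L(s)^{-2}\,ds$ because $r(t)\cdot tr(t)\sim \pi^{-2}t^{-1}/L(t)^2$ is of smaller order than a slowly varying tail (using that $\int_t^\infty$ of a regularly varying $-1$ density dominates its pointwise value); the middle region $s\in[\epsilon t,t/\epsilon]$ is estimated using $L(s)\sim L(t)$ uniformly and contributes a term that, after letting $\epsilon\to0$, matches the tail integral; and the tail $s>t/\epsilon$ is exactly the claimed leading term. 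The technical crux throughout is the uniform (Potter bound) control of $L(xs)/L(s)$ needed to justify interchanging limits and the splitting, and verifying that the surviving term is indeed $\int_t^\infty s^{-1}L(s)^{-2}\,ds$ up to the constant $\sigma^2/\pi^2$; I expect this uniformity argument, rather than any single computation, to be the principal obstacle.
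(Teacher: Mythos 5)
Your outline follows the paper's own proof quite closely in its overall architecture: both derive the pointwise decay of $r$ from the Tauberian argument behind Theorem~\ref{thm:explicitasymptotics}, both obtain the $L^2$ criterion \eqref{eq.limit_finite} by comparing $r(t)^2$ with $t^{-1}L(t)^{-2}$, and both get the covariance asymptotics by splitting $c(t)=\sigma^2\int_0^\infty r(s)r(s+t)\,ds$ into a near and a far region and invoking the Karamata fact that for an $\text{RV}_\infty(-1)$ integrand the tail integral dominates $t$ times the integrand, so that $tr(t)^2$ is negligible against $\int_t^\infty s^{-1}L(s)^{-2}\,ds$. The genuine difference is how the far region is handled. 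You plan Potter-bound/uniform-convergence estimates and flag this uniformity as the principal obstacle; the paper sidesteps it entirely by using that $r$ is positive and \emph{non-increasing} --- a fact already secured in the proof of Theorem~\ref{thm:explicitasymptotics} via Levin's theorem (see \eqref{eq:rho}), available here precisely because of the log-convexity hypothesis (C2). Monotonicity gives at once $\int_{2t}^\infty r(s)^2\,ds\le\int_t^\infty r(s)r(s+t)\,ds\le\int_t^\infty r(s)^2\,ds$, and L'H{\^o}spital's rule together with $\int_t^{2t}r(s)^2\,ds\le tr(t)^2=o(f(t))$ identifies the limit; no Potter bounds are needed. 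Your route can be made rigorous, but one statement in it is wrong as written: the middle region $[\epsilon t,t/\epsilon]$ does not ``match the tail integral'' as $\epsilon\to0$; for each fixed $\epsilon$ it contributes $O(L(t)^{-2})$, which is negligible against the slowly varying tail $\int_t^\infty s^{-1}L(s)^{-2}\,ds$ by the same Karamata fact, and the entire leading term comes from $s>t/\epsilon$.

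There is also a computational slip you should be aware of: by Karamata's theorem, $\Lambda(t)\sim 2t\lambda(t)\sim 4t^{1/2}L(t)$, not $2t^{1/2}L(t)$, so your own chain of reasoning actually yields $r(t)\sim (2\pi)^{-1}t^{-1/2}/L(t)$ and hence the constant $\sigma^2/(4\pi^2)$ rather than $\sigma^2/\pi^2$; you land on the theorem's stated constant only through this slip. In fairness, the paper's proof commits the mirror-image error: it applies Theorem~\ref{thm:explicitasymptotics} as though $L$ (the slowly varying part of $k$) were the slowly varying part of $\lambda$, whereas $\lambda(t)\sim 2L(t)t^{-1/2}$, so a correct application gives $\lim_{t\to\infty}r(t)t^{1/2}L(t)=1/(2\pi)$, not $1/\pi$. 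The factor-of-four ambiguity in the constant is thus inherited from the paper itself, and apart from it your proposal is consistent with the paper's argument.
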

\begin{proof}
Theorem \ref{thm:explicitasymptotics} yields that
\begin{equation}\label{eq.lim_r_12}\lim_{t\to \infty}r(t)t^{1/2}L(t)= \lim_{t\to \infty} r(t)k(t)t^2 =\frac{1}{\pi}.\end{equation}
Since $r$ is continuous on $[0, \infty)$, $r\in L^2([0,
\infty);\mathbb{R})$ if and only if
\[ \int_1^\infty \frac{1}{t^4 k(s)^2}\, dt= \int_1^\infty \frac{1}{tL(t)^2}\, dt<\infty.\]
In this case we denote by
\[
f(t):=\frac{\sigma^2}{\pi^2} \int_t^\infty \frac{1}{s^4 k(s)^2}\,ds,
\; t\ge 0.
\]
The integrand of $f$ is regularly varying with index $-1$. Then, by
Karamata's Theorem (see e.g.~\cite{Goldie}, Theorem 1.5.11) we
obtain
\begin{equation}\label{eq.kar1}\frac{t}{t^4 k^2(t)f(t)}\to 0, \quad \mbox{for $t\to \infty$.}\end{equation}
Moreover, with \eqref{eq.lim_r_12} and \eqref{eq.kar1} it holds that
\begin{equation}\label{eq.kar2}\lim_{t\to \infty}  \frac{tr(t)^2}{f(t)}
=\lim_{t\to \infty}r(t)^2t^4 k(t)^2\lim_{t\to \infty} \frac{t}{t^4
k(t)^2f(t)}= 0.\end{equation} We write
\[ \frac{c(t)}{f(t)}=\frac{\sigma^2}{f(t)}\int_0^t r(s)r(t+s)\, ds+\frac{\sigma^2}{f(t)}\int_t^\infty r(s)r(t+s)\, ds=:I_1(t)+I_2(t), \; t\ge 0.\]
By \eqref{eq:rho}, $r$ is positive and non--increasing, hence we
obtain the following  upper bound for $I_2(t)$:
\begin{equation} \label{lhosp} I_2(t)\le\frac{\sigma^2}{f(t)} \int_t^\infty r(s)^2\, ds, \quad t\ge 0.\end{equation}
The denominator and the numerator in \eqref{lhosp} tend to zero as
$t$ tends to infinity, therefore,  we may apply   L'H{\^o}spital's
rule to obtain
\begin{equation} \label{eq:limit1}
\lim_{t\to \infty} \frac{\sigma^2}{f(t)} \int_t^\infty r(s)^2\,
ds=\lim_{t\to\infty}\pi^2 r(t)^2 t^4 k(t)^2 =1.
\end{equation}
On the other hand,
\begin{equation} \label{lhosp1} I_2(t)\ge\frac{\sigma^2}{f(t)} \int_t^\infty r(s+t)^2\, ds
=\frac{\sigma^2}{f(t)} \int_{2t}^\infty r(s)^2\, ds, \quad t\ge 0.
\end{equation}
By \eqref{eq.kar2} we have
\begin{equation} \label{lhosp2}
\lim_{t\to \infty} \frac{1}{f(t)}\int_t^{2t} r(s)^2\, ds\le
\lim_{t\to \infty}\frac{tr(t)^2}{f(t)}=0.
\end{equation}
Combining \eqref{lhosp}, \eqref{eq:limit1}, \eqref{lhosp1} and
\eqref{lhosp2} we obtain $\lim_{t\to \infty} I_2(t)=1$. The term
$I_1(t)$ vanishes as $t$ tends to infinity: applying Karamata's
theorem to $r\in\mbox{RV}_\infty (-1/2)$ and using \eqref{eq.kar2},
we obtain
\[\lim_{t\to \infty}\frac{ I_1(t)}{\sigma^2}\le \lim_{t\to \infty}\frac{ r(t)}{f(t)}\int_0^t r(s)\, ds
=\lim_{t\to \infty}\frac{\int_0^t r(s)\, ds}{tr(t)}\cdot
\frac{tr(t)^2}{f(t)}=2\lim_{t\to \infty} \frac{tr(t)^2}{f(t)}=0.\]
This completes the proof.
\end{proof}
To see that it is possible to obtain arbitrary rates of decay for
$c$ in the class of slowly varying functions which tend to zero, we
consider such a function $\gamma\in \text{RV}_\infty(0)$. We
demonstrate this claim, under a mild technical assumption on
$\gamma$.
\begin{corollary}
Suppose that $\gamma$ is in $C^1((0,\infty);(0,\infty))$,
$\gamma(t)\to 0$ as $t\to\infty$ and that $-\gamma'\in
\text{RV}_\infty(-1)$. Then $\gamma\in \text{RV}_\infty(0)$ and
there exists $L\in \text{RV}_\infty(0)$ which satisfies
\eqref{eq.limit_finite} and
\begin{equation} \label{eq.constructLgammaasy}
\int_t^\infty \frac{1}{sL^2(s)}\,ds \sim \gamma(t), \quad\text{as
$t\to\infty$}.
\end{equation}
\end{corollary}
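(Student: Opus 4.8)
The plan is to treat both assertions as two facets of a single computation: inverting the relation $\frac{d}{dt}\int_t^\infty \frac{1}{sL^2(s)}\,ds=-\frac{1}{tL^2(t)}$. Since the right-hand side is prescribed to match $\gamma$ through its derivative, the natural move is to force $\frac{1}{tL^2(t)}=-\gamma'(t)$ and then simply integrate; the hypothesis $-\gamma'\in\text{RV}_\infty(-1)$ is exactly what guarantees that the resulting $L$ is slowly varying and that $\gamma$ itself is slowly varying.

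First I would record the elementary structural consequences. As every regularly varying function is positive, $-\gamma'$ is positive for all large $t$, so $\gamma$ is eventually strictly decreasing; combined with $\gamma(t)\to 0$ this yields, for $t$ beyond some threshold $t_0$,
\[
\gamma(t)=\int_t^\infty \bigl(-\gamma'(s)\bigr)\,ds.
\]
Writing $-\gamma'(t)=t^{-1}\ell(t)$ with $\ell\in\text{RV}_\infty(0)$ (positive for $t\ge t_0$), the first assertion $\gamma\in\text{RV}_\infty(0)$ follows from the critical case $\rho=-1$ of Karamata's theorem: the tail integral $\int_t^\infty s^{-1}\ell(s)\,ds$ of a slowly varying function, when finite, is again slowly varying (see \cite{Goldie}).

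The construction of $L$ is then immediate. I would set $L(t):=\ell(t)^{-1/2}$ for $t\ge t_0$ and extend $L$ by a positive constant on $[0,t_0)$; since $\ell$ is slowly varying and positive, $L\in\text{RV}_\infty(0)$, the bounded extension being irrelevant to this asymptotic property. With this choice one has, for $t\ge t_0$,
\[
\frac{1}{tL^2(t)}=\frac{\ell(t)}{t}=-\gamma'(t),
\]
whence $\int_t^\infty \frac{1}{sL^2(s)}\,ds=\gamma(t)$ for all $t\ge t_0$; in particular \eqref{eq.constructLgammaasy} holds, in fact with equality rather than merely asymptotically for large $t$. Finiteness in \eqref{eq.limit_finite} follows by splitting $\int_1^\infty \frac{ds}{sL^2(s)}=\int_1^{t_0}+\int_{t_0}^\infty$: the first piece is finite because the integrand is continuous and positive on a compact interval, and the second equals $\gamma(t_0)<\infty$.

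The only place demanding care is the bookkeeping around the threshold $t_0$: the hypothesis $-\gamma'\in\text{RV}_\infty(-1)$ controls $\gamma'$ only at infinity, so $\gamma'$ may fail to be negative (and $\ell$ to be positive) on a bounded initial interval, which is why $L$ must be given by the formula only for large $t$ and extended positively below $t_0$. The sole genuine analytic input is the critical ($\rho=-1$) instance of Karamata's theorem used to conclude $\gamma\in\text{RV}_\infty(0)$; the rest is the differentiation/integration identity above.
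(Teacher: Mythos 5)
Your proof is correct and takes essentially the same route as the paper's: your construction $L=\ell^{-1/2}$ with $\ell(t)=-t\gamma'(t)$ is identical to the paper's definition $L^2(t)=-1/(t\gamma'(t))$, so the tail integral equals $\gamma(t)$ exactly, and slow variation of $\gamma$ comes from the same (critical-index) Karamata fact about tail integrals of integrable $\text{RV}_\infty(-1)$ functions. The only difference is your extra bookkeeping around the threshold $t_0$, which the paper sidesteps since its definition of regular variation already builds in positivity of $-\gamma'$ everywhere.
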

\begin{proof}
For any $T>t\geq 0$, we have $\gamma(T)-\gamma(t)=\int_t^T
\gamma'(s)\,ds$. Letting $T\to\infty$, we see that
$\gamma(t)=\int_{t}^\infty -\gamma'(s)\,ds$. $-\gamma'$ is
integrable because $\gamma(t)\to 0$ as $t\to\infty$. The fact that
$-\gamma'\in \text{RV}_\infty(-1)$ and is integrable forces $\gamma$
to be in $\text{RV}_\infty(0)$. Define the function $L:[1,\infty)\to
(0,\infty)$ by
\begin{equation} \label{eq.L2}
L^2(t)=\frac{-1}{t\gamma'(t)}.
\end{equation}
Clearly $L\in \text{RV}_\infty(0)$. Moreover for any $T\geq 1$
\[
\int_1^T \frac{1}{sL^2(s)}\,ds = \int_1^T -\gamma'(s) \,ds
=\gamma(1)-\gamma(T).
\]
Since $\gamma(T)\to 0$ as $T\to\infty$, it follows that $L$ obeys
\eqref{eq.limit_finite}. The asymptotic relation
\eqref{eq.constructLgammaasy} is an obvious consequence of the
construction of $L$.
\end{proof}
%We give an example of how $k$ can be chosen so that the autocovariance function decays very slowly. In this example, we recall the
%notation
\begin{remark}
By applying Theorem~\ref{thm:half}, it can be seen that if $k(t)\sim
t^{-3/2}L(t)$ as $t\to\infty$, where $L$ is given by \eqref{eq.L2},
then $c(t)\sim \sigma^2/\pi^2 \gamma(t)$ as $t\to\infty$. Therefore,
functions $k$ exist such that the rate of convergence of the
autocovariance function is an (essentially) arbitrary function in
$\text{RV}_\infty(0)$. For example, $c(t)$ can decay to zero at a
rate asymptotic to $(\log \log \log \cdots \log t)^{-1}$ as
$t\to\infty$, where there are finitely but arbitrarily many
compositions of logarithms.
\end{remark}

%\begin{remark}
%The case $\alpha=1/2$ is very interesting. Depending on the properties of the slowly varying function
%$L$, both cases $r\in L^2([0, \infty);\mathbb{R})$ and $r\not \in L^2([0, \infty);\mathbb{R})$ are possible.
%To explain this, we present two examples. Let
%\[ k_1(t)=(1+t)^{-3/2} \quad \mbox{and}\quad k_2(t)=(1+t)^{-3/2}\max\{\log (t), 1\}.\]
%Since the function $k_2$ does not satisfy  (C2), we assume without loss of generality that $k_2$ is the asymptotically equivalent function,
%which is completely monotone according to Theorem 1.8.3 in \cite{Goldie} and hence satisfies (C2).
%Let $r_1$ and $r_2$ denote the corresponding fundamental solutions. With Theorem \ref{thm:explicitasymptotics} we obtain
%\[ r_1(t)\sim \frac{\sin \alpha \pi}{\pi}t^{-1/2}, \quad r_2(t)\sim \frac{\sin \alpha \pi}{\pi}\frac{t^{-1/2}}{\log t}, \quad t\to \infty\]
%Now, $r_1$ is clearly not square integrable. Since  for $c>1$ it holds that \[\int_c^\infty \frac{1}{t(\log t)^2}\, dt=\frac{1}{\log c}<\infty,\]
%we have $r_2\in L^2([0, \infty);\mathbb{R})$. Let us now determine the asymptotic behavior of the covariance function $c_2$.
%By \eqref{eq:rho}, $r_2$ is decreasing. Therefore, \[
%\lim_{t\to \infty}\log t\int_0^\infty r(s)r(t+s)\, ds\ge \lim_{t\to \infty} \log t \int_0^\infty r(t+s)^2 \, ds=1.\]
%Moreover,
%\[ \lim_{t\to \infty} \log t \int_0^{\sqrt{t}} r(s)r(t+s)\, ds \le \lim_{t\to \infty} \log t \int_0^{\sqrt{t}} r(t+s)\, ds=1.\]
%and
%\[\lim_{t\to \infty} \log t \int_{\sqrt{t}}^\infty r(s)r(t+s)\, ds\le \lim_{t\to \infty} \log t \int_{\sqrt{t}}^\infty r^2(s)\, ds=2.\]
%\end{remark}

\section{Long memory in the discrete  equation} \label{section_discrete}
In this section we study the discrete counterparts to equations \eqref{eq.fund1} and \eqref{eq.stoch} for some summable kernels $k$ with infinite mean. %It turns out that the discrete analogue of equation \eqref{eq.fund1} may be  regarded as a null--recurrent renewal sequence, c.f.~Garsia and Lamperti \cite{Garsia}, Isaac \cite{Isaac}.
\subsection{Asymptotic Behaviour of the Deterministic Resolvent}
Let us first consider the deterministic equation \eqref{discrete} with
% the discrete version of \eqref{eq.fund1}:
%\begin{equation}
%r_{n+1}-r_n=ar_n+\sum_{j=1}^n k_j r_{n-j}, \quad n\ge 1, \quad r_0=1,\label{discrete}\end{equation}
%where
 $a+\sum_{j=1}^\infty k_j=0$.
If $1+a>0$ and $(k_n)_{n\ge 1}$ has infinite mean, the classical renewal theorem yields that $r_n$ converges to zero as $n$ tends to infinity. If $(k_n)_{n\ge 1}$ has a regularly varying tail (Garsia and Lamperti  \cite{Garsia}, Theorem 1.1)  and $(r_n)_{n\in \mathbb{N}}$ is monotone non--increasing (Isaac \cite{Isaac}, Theorem 3.1), the exact convergence rates are also known.

In this section we prove that if the tail $\left(\sum_{j=n}^\infty k_j\right)_{n\ge 1}$ is a so--called Kaluza sequence, which is a discrete analogue of log--convexity, then the sequence $(r_n)_{n\in \mathbb{N}}$ is monotone non--increasing and we can apply the above mentioned theorems.
\begin{theorem} \label{thm_deterministic_discrete}
Let $(k_n)_{n\ge 1}$ be a positive sequence such that $\sum_{j=1}^\infty k_j\le 1$. Moreover, let  $\lambda_n:=\sum_{j=n}^\infty k_j, \: n\ge 1,$ satisfy:
\begin{enumerate}
\item[(C2')] $(\lambda_n)_{n\ge 1}$ is  a Kaluza sequence, that is $\lambda_n^2\le \lambda_{n-1} \lambda_{n+1}$ for all $n\ge 1$,
\item[(C3')] $\lambda_n=L(n)n^{-\alpha}$, where $0<\alpha<1$ and $L(n)$ is a slowly varying sequence.
\end{enumerate}
Then
\[ \lim_{n\to \infty} n^{1-\alpha} L(n) r_n =\frac{\sin \alpha \pi}{\pi}.\]
\end{theorem}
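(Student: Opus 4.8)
The plan is to mirror the structure of the continuous proof in Theorem~\ref{thm:explicitasymptotics}, replacing Laplace transforms by generating functions and integral resolvents by convolution of sequences. First I would derive the discrete analogue of the renewal identity \eqref{eq:eqrlambda}. Starting from \eqref{discrete} with $a=-\sum_{j\ge1}k_j=-\lambda_1$, I would show by summation by parts (or by an induction on $n$) that
\begin{equation*}
r_n+\sum_{j=1}^{n}\lambda_j\,r_{n-j}=1,\qquad n\ge0,
\end{equation*}
where $\lambda_j=\sum_{i\ge j}k_i$. This plays the role of $r(t)+\int_0^t\lambda(t-s)r(s)\,ds=1$ and is the key link between $r$ and the tail sequence $\lambda$.

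Next I would pass to generating functions. Writing $R(x)=\sum_{n\ge0}r_nx^n$ and $\Lambda(x)=\sum_{n\ge1}\lambda_nx^n$, the convolution identity gives
\begin{equation*}
R(x)\bigl(1+\Lambda(x)\bigr)=\frac{1}{1-x},\qquad 0\le x<1,
\end{equation*}
so that $R(x)=\{(1-x)(1+\Lambda(x))\}^{-1}$. I would then determine the behaviour of $\Lambda(x)$ as $x\uparrow1$ from (C3') using a Tauberian/Abelian theorem for power series with regularly varying coefficients (the discrete Karamata theorem, as in Feller~\cite{Feller:1} or Bingham--Goldie--Teugels~\cite{Goldie}): since $\lambda_n\in\mathrm{RV}_\infty(-\alpha)$ with $\alpha\in(0,1)$, one gets $(1-x)\Lambda(x)\sim\Gamma(1-\alpha)\,\lambda_{1/(1-x)}\sim\Gamma(1-\alpha)(1-x)^{\alpha}L(1/(1-x))$ as $x\uparrow1$. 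Hence $(1-x)(1+\Lambda(x))\sim\Gamma(1-\alpha)(1-x)^{\alpha}L(1/(1-x))$ and therefore $R(x)\sim\{\Gamma(1-\alpha)\}^{-1}(1-x)^{-\alpha}L(1/(1-x))^{-1}$, a regularly varying function of $(1-x)^{-1}$ with index $\alpha$.

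To invert this I would use the monotonicity of $(r_n)$, and here the Kaluza hypothesis (C2') is essential. The discrete analogue of Theorem~1.2 in \cite{Levin} (that a log--convex, here Kaluza, kernel forces the integral resolvent to lie between $0$ and the kernel) should give that $\rho_n:=r_{n-1}-r_n$ is the resolvent sequence of $\lambda$ and satisfies $0\le\rho_n\le\lambda_n$ with $\sum\rho_n=1$; this yields $r_n\ge0$ and monotone non--increasing. With $(r_n)$ monotone I can apply the monotone--density version of the Tauberian theorem (Feller, Theorem XIII.5 analogue, or Bojanic--Seneta for sequences) to conclude $r_n\sim\{\Gamma(1-\alpha)\Gamma(\alpha)\}^{-1}n^{\alpha-1}/L(n)$. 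Finally, using $\Lambda_n:=\sum_{j\le n}\lambda_j\sim(1-\alpha)^{-1}n^{1-\alpha}L(n)$ (discrete Karamata) together with the reflection identity $\Gamma(\alpha)\Gamma(1-\alpha)=\pi/\sin(\alpha\pi)$, this rearranges to $n^{1-\alpha}L(n)r_n\to\sin(\alpha\pi)/\pi$, exactly as claimed; this is also where the quoted results of Garsia--Lamperti~\cite{Garsia} and Isaac~\cite{Isaac} can be invoked to identify the constant.

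The main obstacle I anticipate is establishing the monotonicity of $(r_n)$ from the Kaluza condition rigorously, i.e.\ proving the discrete counterpart of the log--convex resolvent comparison $0\le\rho_n\le\lambda_n$. Without monotonicity the Tauberian step only controls the generating function and not $r_n$ pointwise, so the whole asymptotic \eqref{r_lim}-type conclusion hinges on this comparison. The analytic estimate $(1-x)\Lambda(x)\sim\Gamma(1-\alpha)\lambda_{1/(1-x)}$ is routine discrete Karamata, but care is needed because $\Lambda(x)$ summing $\lambda_n\in\mathrm{RV}(-\alpha)$ diverges as $x\uparrow1$, so one must track the $(1-x)$ factors and the slowly varying part $L$ consistently throughout.
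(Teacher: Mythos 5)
Your proposal is correct in outline but reaches the asymptotics by a genuinely different route than the paper. The shared core: your renewal identity $r_n+\sum_{j=1}^n\lambda_j r_{n-j}=1$ is exactly what the paper establishes (it shows $a_n:=r_n+\sum_j r_j\lambda_{\cdot-j}$ is constant and equal to $1$), and both arguments hinge on deducing monotonicity of $(r_n)$ from the Kaluza property. Where you diverge: the paper does no Tauberian analysis at all. Having obtained monotonicity, it observes that $(r_n)$ is the renewal sequence of the probability distribution $p_1=1+a$, $p_j=k_{j-1}$ for $j\ge 2$ (this is where $\sum_j k_j\le 1$ enters), whose tails are precisely $\lambda_n$, and then simply cites Garsia--Lamperti \cite{Garsia}, Theorem 1.1, for $1/2<\alpha<1$ (where no monotonicity is needed) and Isaac \cite{Isaac}, Theorem 3.1, for $\alpha\le 1/2$ (where monotonicity is needed). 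Your chain --- $R(x)\bigl(1+\Lambda(x)\bigr)=1/(1-x)$, discrete Karamata giving $R(x)\sim\{\Gamma(1-\alpha)\}^{-1}(1-x)^{-\alpha}/L(1/(1-x))$, then the Karamata Tauberian theorem for power series plus the monotone density theorem \cite{Goldie} --- is in effect a self-contained re-proof of those two cited theorems in the monotone setting; it treats all $\alpha\in(0,1)$ uniformly and mirrors the continuous proof of Theorem \ref{thm:explicitasymptotics}, at the cost of redoing machinery the paper outsources. Both routes are valid, and your constant bookkeeping ($\Gamma(\alpha)\Gamma(1-\alpha)=\pi/\sin(\alpha\pi)$) is right.

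The one genuine gap is the step you yourself flag: you assert that a ``discrete analogue of Levin's Theorem 1.2'' \emph{should} give $0\le\rho_n\le\lambda_n$ for $\rho_n:=r_{n-1}-r_n$, but you do not prove it, and for $\alpha\le 1/2$ (equivalently, for your monotone-density step) everything hinges on it. The paper closes exactly this hole: from the constancy of $a_n$ one derives the recurrence $\Delta_n=\lambda_n-\sum_{j=1}^{n-1}\lambda_{n-j}\Delta_j$ for the differences $\Delta_n=-(r_n-r_{n-1})$, i.e.\ $\Delta$ is the resolvent sequence of $\lambda$ as you anticipated, and the non-negativity of this resolvent for a Kaluza kernel is then a theorem of Shanbhag \cite{Shanbhag} (the discrete counterpart of the log-convexity result of \cite{Levin} used in the continuous case). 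So your plan is completable, but the missing ingredient is precisely that citation, or a direct proof that Kaluza sequences have non-negative resolvents; it is not routine bookkeeping, and without it the pointwise asymptotics for $\alpha\le 1/2$ do not follow.
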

%Hence, the discrete analogue of limit in \eqref{r_lim} also holds.
\begin{proof}
Since $(L(n))_{n\in \mathbb{N}}$ is slowly varying, so is the function $x\mapsto L([x])$.
Since $1+a\ge0$, we can apply Theorem 1.1 in \cite{Garsia} to obtain the result for $1/2<\alpha <1$. For $\alpha\le 1/2$ the claim follows from  \cite{Isaac}, Theorem 3.1 if the sequence $(r_n)_{n\ge 0}$ is monotone non--increasing. To show this, we define
\[ a_n:= r_n +\sum_{j=1}^{n-1} r_j \lambda_{n+1-j}, \quad n\ge 0, \]
to obtain
\begin{align*}
a_{ n+1}-a_n&=r_{n+1}-r_n +\sum_{j=0}^{n-1}(\lambda_{n+1-j} -\lambda_{n-j})r_j +r_n \lambda_1\\
&=r_{n+1}-r_n-\sum_{j=0}^{n-1} k_{n-j} r_j +r_n a\\
&=0.\end{align*}
Hence, $(a_n)_{n\ge 0}$ is a constant sequence and equals $a_0=r_0=1$. With $\Delta_n:=-(r_n-r_{n-1})$ we have\begin{align*}
0=a_n-a_{n-1}&= -\Delta_n+\sum_{j=0}^{n-1} r_j \lambda_{n-j} -\sum_{j=0}^{n-2} r_j \lambda_{n-1-j}\\
&=-\Delta_n +\sum_{j=1}^{n-1} \lambda_{n-j}(r_j-r_{j-1})+\lambda_n\\
&= -\Delta_n - \sum_{j=1}^{n-1} \lambda_{n-j} \Delta_j +\lambda_n.
\end{align*}
Therefore, $(\Delta_n)_{n\ge 0}$ satisfies the recurrence relation
\begin{equation}
\Delta_n=\lambda_n -\sum_{j=1}^{n-1} \lambda_{n-j} \Delta_j.\end{equation}
Since $(\lambda_n)_{n\ge 0}$ is a Kaluza sequence, it follows from \cite{Shanbhag} that $\Delta_n$ is non--negative for all $n\ge 0$. Hence, the sequence $(r_n)_{n\ge 0}$ is non--increasing and the claim follows.
\end{proof}
\subsection{Asymptotic behaviour of the autocovariance function}
Now we are able to state the discrete analogue of Theorem \ref{theorem.mainresult}:
\begin{theorem} \label{theorem.mainresult_discrete}
Suppose that $k$  satisfies the assumptions of Theorem \ref{thm_deterministic_discrete} with $\alpha\in(0,1/2)$.
Let $r$ be the solution of \eqref{discrete} and $\xi=\{\xi_n:n\in \mathbb{Z}\}$ be a  sequence of random variables defined as in Theorem \ref{theorem.discrete}. %\begin{equation}  \label{deterministic}
 Then there is a unique stationary process $X$ which obeys \eqref{eq.stochstat_discr}:
\begin{align*}
X_{n+1}-X_{n}&=-aX_n +\sum_{j=1}^ \infty k_j X_{n-j} +\xi_{n+1}, \quad n\ge 0;\\
X_n &= \sum_{j=-\infty}^n r_{n-j}\xi_j, \quad n< 0.
\end{align*}
%where $\xi$ is extended to $n\in \mathbb{Z}$ by taking an independent copy $\xi^1$ of $\xi$ (defined on the same probability space) and setting $\xi_{-n}=\xi^1_n, \, n\in \mathbb{N}$.
%$X$ is a stationary zero mean Gaussian process with covariance function given by
%\begin{equation}\label{cov_discrete}
%c(h)= \Cov (X_n,X_{n+h})=\sigma^2\sum_{n=0}^\infty r_n r_{n+h}, \quad h\in\mathbb{N}.
%\end{equation}
The autocovariance function $c(\cdot)=\Cov (X_n,X_{n+\cdot})$   obeys
\begin{equation} \label{eq:asymptotocs_c_discrete}
\lim_{h\to\infty} c(h) L^2(h)h^{1-2\alpha}
= \sigma^2\frac{\Gamma(1-2\alpha)\Gamma(\alpha)}{\Gamma(1-\alpha)}\cdot \frac{ \sin^2(\pi \alpha)}{\pi^2}.
\end{equation}
\end{theorem}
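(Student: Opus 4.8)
The plan is to prove the discrete analogue Theorem~\ref{theorem.mainresult_discrete} by following exactly the same scheme as in the continuous case (Theorem~\ref{theorem.mainresult}), replacing integrals by sums throughout and invoking the discrete resolvent asymptotics already established in Theorem~\ref{thm_deterministic_discrete}. The starting point is the autocovariance formula \eqref{cov_discrete}, which by Theorem~\ref{theorem.discrete} is valid once we know $r\in\ell^2$. First I would verify the hypotheses of Theorem~\ref{theorem.discrete}: since $k$ satisfies (C2') and (C3') with $\alpha\in(0,1/2)$, Theorem~\ref{thm_deterministic_discrete} gives $r_n\sim (\sin\alpha\pi/\pi)\,n^{\alpha-1}/L(n)$, so $(r_n)_{n\in\mathbb{N}}\in\mathrm{RV}_\infty(\alpha-1)$ with $\alpha-1\in(-1,-1/2)$; this forces $r\in\ell^2\setminus\ell^1$. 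Hence the stationary process $X$ exists, is unique, and its autocovariance is
\begin{equation}
c(h)=\sigma^2\sum_{n=0}^\infty r_n r_{n+h}, \quad h\in\mathbb{N}.
\end{equation}

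Next I would split the sum at $n=h$, writing $c(h)/\sigma^2 = \sum_{n=0}^{h} r_n r_{n+h} + \sum_{n=h+1}^\infty r_n r_{n+h}$, in parallel with the decomposition $I_1+I_2$ used in the proof of Theorem~\ref{thm:half}. The dominant contribution comes from the tail sum, which by the Abel/summation-by-parts analogue of the Karamata argument behaves like $\sum_{n>h} r_n^2$. Using $r_n\in\mathrm{RV}_\infty(\alpha-1)$ and the discrete Karamata theorem for regularly varying sequences (Theorem~\ref{thm_deterministic_discrete} combined with the embedding of regularly varying sequences into regularly varying functions noted in Section~\ref{prelim}), one evaluates $\sum_{n>h} r_n^2 \sim (1/(1-2\alpha))\,h\,r_h^2$, and the cross-term sum $\sum_{n=0}^h r_n r_{n+h}$ contributes at the same order with a Beta-integral constant. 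Concretely, after replacing $r_n$ by its asymptotic equivalent and approximating the sum by the integral $\int_0^\infty (x(1+x))^{\alpha-1}\,dx = B(1-2\alpha,\alpha)=\Gamma(1-2\alpha)\Gamma(\alpha)/\Gamma(1-\alpha)$, one obtains
\begin{equation}
c(h)\sim \sigma^2 \left(\frac{\sin\alpha\pi}{\pi}\right)^2 \frac{\Gamma(1-2\alpha)\Gamma(\alpha)}{\Gamma(1-\alpha)}\cdot \frac{h^{2\alpha-1}}{L^2(h)},
\end{equation}
which is precisely \eqref{eq:asymptotocs_c_discrete}.

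The main obstacle I anticipate is making the passage from the sum $\sum_{n=0}^\infty r_n r_{n+h}$ to the Beta integral fully rigorous in the discrete setting, since regularly varying sequences do not enjoy the uniform-convergence (Potter-type) bounds of regularly varying functions quite as cleanly. The cleanest route is to embed $(r_n)$ into the regularly varying function $r(x):=r_{[x]}$ and reduce the sum to the integral $\int_0^\infty r(x)r(x+h)\,dx$ up to controllable error, then run verbatim the dominated-convergence argument of the continuous proof: substitute $x=hy$, use the uniform convergence theorem for regularly varying functions to replace $r(hy)r(h(1+y))$ by its limit $(\sin\alpha\pi/\pi)^2 h^{2\alpha-2}L^{-2}(h)(y(1+y))^{\alpha-1}$ pointwise in $y$, and dominate by an integrable majorant of the form $C(y(1+y))^{\alpha-1-\varepsilon}$ valid for small $\varepsilon>0$ (integrable since $2(\alpha-1)+1<0$ controls the tail and $\alpha-1>-1$ controls the origin). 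The near-diagonal region $y$ of order $1/h$ requires a separate elementary bound exactly as in \eqref{lhosp2}, using monotonicity of $(r_n)$ established in Theorem~\ref{thm_deterministic_discrete}. Once the integral is evaluated as the Beta function above, the stated constant follows directly, completing the proof.
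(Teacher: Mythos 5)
Your proposal is correct and takes essentially the same route as the paper: the paper likewise uses Theorem \ref{thm_deterministic_discrete} to show that $(r_n)$ is regularly varying of index $\alpha-1$ (hence $r\in\ell^2\setminus\ell^1$), embeds the sequence as the step function $r(x):=r_{[x]}$ so that $c(h)=\sigma^2\int_0^\infty r(x)r(x+h)\,dx$ holds \emph{exactly} for integer $h$ (no error term is needed), and then applies the continuous-time lemma, Theorem \ref{theorem.acvfasymptotics} --- whose proof is precisely the Potter-bound/dominated-convergence argument, with the near-origin region handled by monotonicity and Karamata, that you outline --- to obtain the constant $\Gamma(\alpha)\Gamma(1-2\alpha)/\Gamma(1-\alpha)$ and hence \eqref{eq:asymptotocs_c_discrete}. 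The only slip, which your final argument silently corrects, is the claim that the tail sum $\sum_{n>h}r_nr_{n+h}$ dominates: for $\alpha\in(0,1/2)$ the ranges $n\le h$ and $n>h$ contribute at the same order (tail dominance is special to the critical case $\alpha=1/2$ of Theorem \ref{thm:half}), which is exactly why the full Beta integral appears.
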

\begin{proof}
The stationary solution is given by $X(n)=\sum_{j=-\infty}^n r_{n-j}\xi_j, \, n\in \mathbb{Z}$, and its autocovariance function obviously satisfies \eqref{cov_discrete}. Since the sequence $(L(n))_{n\in \mathbb{N}}$ is slowly varying we obtain with Theorem \ref{thm_deterministic_discrete}  for all $\lambda>0$
\begin{align*} \lim_{n\to \infty}\frac{r_{[\lambda n]}}{r_n}&= \lim_{n\to \infty}\frac{L(n)n^{1-\alpha}}{[\lambda n]^{1-\alpha}L([\lambda n])}=
\lim_{n\to \infty}\frac{n^{1-\alpha}}{[\lambda n]^{1-\alpha}}\\&=\lim_{n\to \infty} \left( \lambda +\frac{ [\lambda n               ] -\lambda n}{n}\right)^{\alpha -1}=\lambda ^{\alpha-1}.\end{align*}
Hence the positive sequence $(r_n)_{n\in\mathbb{N}}$ is regularly varying with index $\alpha-1$. Therefore, as mentioned in Section \ref{prelim}, the function $r(x):=r_{[x]}$, $x\ge 0$, is also regularly varying and we may write
\[c(h)=\sigma^2 \int_0^\infty r(x)r(x+h), \quad h\in \mathbb{N}.\]
With Theorem \ref{theorem.acvfasymptotics} we obtain
\[ \lim_{h\to \infty} \frac{c(h)}{hr_h}=L.\]
Following the steps of the proof of Theorem \ref{theorem.mainresult} we obtain \eqref{eq:asymptotocs_c_discrete}.
\end{proof}
\section{Subexponential decay of the autocovariance function}
In this section we study the properties of the autocovariance function of the stationary solution of the main continuous-- and discrete--time equations \eqref{eq.stoch1} and \eqref{discr_stoch}  if the kernel $k$ is again regularly varying with index $-1-\alpha$, $\alpha>0$ but now  $a+\int_0^\infty k(s)\, d s<0$ or $a+\sum_{n=1}^\infty k_n<0$ holds respectively.

Then,  $k$ is a subexponential function or sequence in the sense of Appleby et al.~\cite{jaigdwr:2004b, App_exact_discrete}. In this case, the fundamental solution in both discrete-- (Theorem 3.2 in \cite{App_exact_discrete}) and continuous--time (Theorem 15 in \cite{jaigdwr:2004b}) decays at the same rate as the kernel $k$. Since $k$ is regularly varying with parameter $-\alpha-1<-1$, $r\in L^2([0, \infty);\mathbb{R})\cap L^1([0, \infty);\mathbb{R})$. This implies that the autocovariance function of the stationary solution is integrable. The next results show that nevertheless the autocovariance function  decays very slowly: it converges to zero at same rate as the kernel $k$, that is  polynomially.
\begin{remark}
If $a+\int_0^\infty k(s)\, ds>0$, then the fundamental solution grows exponentially: The characteristic function of $r$, a function $h$ which satisfies $\hat r(z)=1/h(z), \, \Re z\ge 0,$ is given by $h(z)=z-a-\hat k(z), \, z\in \mathbb{C},$ and satisfies $h(0)=-a-\int_0^\infty k(s)\, ds<0$.
Since $k$ is positive, we obtain for  $x>0$
\[ h(x)=x-a-\int_0^\infty e^{-xs} k(s)\, ds \ge x-a-\int_0^\infty k(s)\, ds,\]
which is positive if $x>a+\int_0^\infty k(s)\, ds>0$. Therefore, by the intermediate value theorem, there exists a positive root of the characteristic function. By the standard theory of Volterra equations this implies that the fundamental solution grows exponentially. Hence, the case $a+\int_0^\infty k(s)\, ds>0$ is not interesting for our research.
\end{remark}
\subsection{Continuous--time stochastic equation with subexponentially decaying memory}
%In this section we show that if the kernel $k$ satisfies
Suppose $k\in C([0,\infty);(0,\infty))$ satisfies
\begin{enumerate}
%\item[(S)] $k\in L^1([0,\infty),(0,\infty))\cap C([0,\infty),(0,\infty))$,
\item[(S1)] $k\in \text{RV}_\infty(-1-\alpha)$ for $\alpha>0$,
\item[(S2)]$a+\int_0^\infty k(s) \, d s<0$.
\end{enumerate}
%In  Definition 1 \cite{jaigdwr:2004b}, the class of subexponential functions is introduced. A function $k$ satisfying (S1) and (S2) is subexponential: condition (4) follows from Potter's bound, the proof of condition (3) is a slight modification of the results in \cite{Goldie}, Appendix 4. Then it follows from
Theorem 15 in \cite{jaigdwr:2004b} yields, that the fundamental solution of \eqref{eq.fund1} converges to zero at the same rate as $k$:
\begin{equation} \label{r_asy_sub}
\lim_{t\to \infty} \frac{r(t)}{k(t)}=\frac{1}{\left(a+\int_0^\infty k(s)\, d s\right)^2}=:L_c.\end{equation}
Moreover, $r$ is also subexponential. Since $r$ is also square integrable, the stationary solution of \eqref{eq.stochstat} exists and the exact rate of decay of the autocovariance function can be determined.
\begin{theorem} \label{thm_suexp_cont}
Suppose $k$ satisfies (S1) and (S2). Let $r$ be solution of \eqref{eq.fund1}. Let $\sigma\in \mathbb{R}\setminus \{0\}$ and $B$ be the Brownian motion defined by \eqref{eq.whollineBM}. Then, the autocovariance function $c(\cdot)=\Cov(X(s), X(s+\cdot))$ of the stationary solution of \eqref{eq.stochstat} satisfies
\begin{equation}
\label{asy_s_sub}
\lim_{t\to \infty} \frac{c(t)}{k(t)}=\frac{\sigma^2}{\left(-a-\int_0^\infty k(s)\, d s\right)^3}>0.\end{equation}
%where the constant $L$ is defined in \eqref{r_asy_sub}.
\end{theorem}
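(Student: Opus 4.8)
The plan is to start from the representation of the autocovariance function supplied by Theorem~\ref{thm.existence covarfn stat}, which applies because (S1) forces $r\in L^2([0,\infty);\mathbb{R})$ as already noted. Thus $c(t)=\sigma^2\int_0^\infty r(s)r(s+t)\,ds$, and dividing by $k(t)$ gives
\[
\frac{c(t)}{k(t)}=\sigma^2\int_0^\infty r(s)\,\frac{r(s+t)}{k(t)}\,ds.
\]
I would show that the right-hand side converges as $t\to\infty$ to $\sigma^2 L_c\int_0^\infty r(s)\,ds$, where $L_c$ is the constant in \eqref{r_asy_sub}, and then evaluate $\int_0^\infty r(s)\,ds$ explicitly. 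Writing $\beta:=-a-\int_0^\infty k(s)\,ds>0$ (so that $L_c=1/\beta^2$ by \eqref{r_asy_sub}), the two computations combine to give $\sigma^2 L_c\int_0^\infty r=\sigma^2/\beta^3$, the claimed limit.

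For the convergence of the integral the first ingredient is the pointwise limit of the integrand. For fixed $s\ge 0$, factor $\frac{r(s+t)}{k(t)}=\frac{r(s+t)}{k(s+t)}\cdot\frac{k(s+t)}{k(t)}$: the first factor tends to $L_c$ by \eqref{r_asy_sub} since $s+t\to\infty$, while the second tends to $1$ because $k\in\text{RV}_\infty(-1-\alpha)$ and a regularly varying function satisfies $k(t+s)/k(t)\to1$ for every fixed shift $s$ (uniform convergence theorem for regular variation). Hence $r(s)\,r(s+t)/k(t)\to L_c\,r(s)$ pointwise. The second and more delicate ingredient is a $t$-uniform integrable dominating function. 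Here I would invoke Potter's bounds for $k$: since the index $-1-\alpha$ is strictly negative, for any $\varepsilon>0$ there is $T$ such that $k(s+t)/k(t)\le 1+\varepsilon$ for all $s\ge 0$ and all $t\ge T$ (with $y/x=1+s/t\ge1$ and a small $\delta$, both exponents $-1-\alpha\pm\delta$ are negative, so the ratio is bounded by $1$). Combining this with $|r(u)|\le C k(u)$ for large $u$ (a consequence of \eqref{r_asy_sub}) yields $|r(s+t)|/k(t)\le C(1+\varepsilon)$ uniformly in $s\ge 0$ for all large $t$, so the integrand is dominated by the integrable function $C(1+\varepsilon)\,|r(s)|$ (recall $r\in L^1$ because $r\sim L_c k$ and $k$ is integrable). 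Dominated convergence then gives $\lim_{t\to\infty} c(t)/k(t)=\sigma^2 L_c\int_0^\infty r(s)\,ds$. I expect this uniform domination to be the main obstacle, since it is the one step where the subexponential/regularly varying structure of $k$ is genuinely needed.

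It remains to evaluate $\int_0^\infty r(s)\,ds$. Taking Laplace transforms in the resolvent equation \eqref{eq.fund1} gives $\hat r(z)\bigl(z-a-\hat k(z)\bigr)=1$ for $\Re z>0$; since $k,r\in L^1$, both transforms are continuous up to $\Re z=0$, and letting $z\to 0$ yields $\int_0^\infty r(s)\,ds=\hat r(0)=1/\bigl(-a-\hat k(0)\bigr)=1/\beta$. (Equivalently, integrate \eqref{eq.fund1} over $[0,\infty)$, use $r(\infty)-r(0)=-1$, and apply Fubini to the convolution term.) Substituting $L_c=1/\beta^2$ and $\int_0^\infty r=1/\beta$ into the limit obtained above gives
\[
\lim_{t\to\infty}\frac{c(t)}{k(t)}=\sigma^2\cdot\frac1{\beta^2}\cdot\frac1\beta=\frac{\sigma^2}{\beta^3}=\frac{\sigma^2}{\bigl(-a-\int_0^\infty k(s)\,ds\bigr)^3},
\]
which is strictly positive, as required.
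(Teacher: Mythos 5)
Your proof is correct, and it follows the same skeleton as the paper's: start from $c(t)=\sigma^2\int_0^\infty r(s)r(s+t)\,ds$ (Theorem~\ref{thm.existence covarfn stat}), show that $\lim_{t\to\infty}c(t)/k(t)=\sigma^2 L_c\int_0^\infty r(s)\,ds$ using \eqref{r_asy_sub} and the regular variation of $k$, and then evaluate $\int_0^\infty r(s)\,ds=1/\beta$ with $\beta=-a-\int_0^\infty k(s)\,ds$. Where you genuinely differ is in justifying the limit interchange. The paper splits the integral at a finite $T$ chosen so that $2L_c\int_T^\infty|r(s)|\,ds<\epsilon$: on $[0,T]$ it uses the locally uniform convergence $k(t+s)/k(t)\to 1$ plus dominated convergence, while on $[T,\infty)$ it invokes the existence of a decreasing function $\lambda$ with $\lambda\sim k$ (Bingham--Goldie--Teugels, Theorem 1.8.3), so that monotonicity gives $\lambda(t+s)/\lambda(t)\le 1$ and the tail integrand is bounded by $2L_c|r(s)|$. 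You instead produce a single global domination: because the index $-1-\alpha$ is strictly negative, Potter's bound with small enough $\delta$ gives $k(t+s)/k(t)\le 1+\varepsilon$ uniformly in $s\ge 0$ for all large $t$, and combined with $|r(u)|\le C k(u)$ for large $u$ (from \eqref{r_asy_sub}) this yields the $t$-independent integrable dominating function $C(1+\varepsilon)|r(s)|$, after which one application of dominated convergence finishes. Your route is cleaner and exploits the full strength of regular variation with negative index; the paper's splitting-plus-monotone-equivalent device is the one that generalizes to the wider subexponential class, where no Potter-type uniform bound is available. A further small point in your favour: you actually derive $\int_0^\infty r(s)\,ds=1/\beta$ (via the Laplace transform of \eqref{eq.fund1}, or equivalently by integrating the equation and applying Fubini), whereas the paper merely asserts this identity, and with a typo (it writes $\int_0^\infty k(s)\,ds$ on the left-hand side where $\int_0^\infty r(s)\,ds$ is meant).
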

\begin{proof}
The autocovariance function of the stationary solution is again given by \eqref{eq.statcovar}. Theorem 1.8.3 in \cite{Goldie} yields, that there exists a decaying  function $\lambda$ with $k(t)\sim \lambda(t)$ for $t\to \infty$. Since $r$ is integrable, we choose for an arbitrary $\epsilon>0$ a sufficiently large $T>0$, so that $2L_c\int_T^\infty |r(s)|\, d s<\epsilon.$
We now write
\begin{equation} \label{cov_split} \int_0^\infty \!\frac{r(t+s)r(s)}{k(t) }\, d s=\!\!\int_0^T\!\! \frac{r(t+s)}{k(t+s)}\frac{k(t+s)}{k(t) }r(s)\, d s+\int_T^\infty\!\frac{\lambda(t)}{k(t)} \frac{r(t+s)}{\lambda(t+s) }\frac{\lambda(t+s)}{\lambda(t)}r(s)\, d s. \end{equation}
The second integral is negligible:
since $\lambda$ is decreasing and $r(t)/\lambda(t)\to L_c$ for $t\to \infty$, the integrand is bounded for sufficiently large $t$ by $2L_c|r(s)|$. Hence
\[\limsup_{t\to \infty}\left|\int_T^\infty \frac{r(t+s)r(s)}{k(t) }\, ds \right|\le2 L_c\int_T^\infty |r(s)|\,d s <\epsilon.\]
Let us now consider the first integral in \eqref{cov_split}. With Potter's bound (cf.~\cite{Goldie}, Theorem 1.5.6) we obtain
\begin{equation} \label{potter_sub}\frac{k(t+s)}{k(t)}\to 1, \quad t\to \infty,\end{equation}
uniformly in $s$ for all $s<T$. Therefore for all sufficiently large $t$
\begin{equation}\label{potter1_sub} \sup_{s\le T} \left|\frac{k(t+s)}{k(t)}\right|\le 2\quad \mbox{and}\quad \sup_{s>0} \left|\frac{r(t+s)}{k(t+s)}\right|\le 2L_c.\end{equation}
Using dominated convergence theorem we obtain
\[ \lim_{t\to \infty}\int_0^T \frac{r(t+s)}{k(t+s)}\frac{k(t+s)}{k(t) }r(s)\, d s=L_c\int_0^T  r(s)\, d s.\]
Hence, the left--hand side of \eqref{asy_s_sub} converges to $L_c \int_0^\infty r(s)\, ds$ and the claim follows from the fact that $\int_0^\infty k(s)\, ds=-1/(a+\int_0^\infty k(s)\, ds)$.
\end{proof}
\begin{example} \label{ex2}Let $\alpha>0$ and
\begin{equation}
k(t)=\frac{1}{(1+t)^{\alpha+1}}, \quad t\geq0.
\end{equation}
We obtain the following convergence rate of the autocovariance function:
\[ \lim_{t\to \infty}c(t)t^{1+\alpha}=\frac{\sigma^2}{(-a-1/\alpha)^3}.\]
%Since $2\alpha-1\in (-1, 0)$ the solution exhibits
\end{example}
\begin{remark}
Examples \ref{ex1} and \ref{ex2} make clear that there is a very
different impact on the rate of convergence of the autocovariance
function from the decay rate of the kernel $k$ according as to
whether we are in the long--memory or subexponential case. In the
latter case, the rate of decay of the autocovariance function $c$ is
proportional to the rate of decay of the kernel $k$, so slow decay
in the memory as measured by the rate of decay of $k$ is reflected
exactly in the statistical memory, as measured by $c$. On the
contrary, in the long--memory case, a faster rate of decay of the
kernel $k$ results in a slower rate of decay of $c$. 
\end{remark}
\subsection{Discrete--time stochastic equation with subexponentially decaying memory}
Let us now consider the equation \eqref{discr_stoch} with a discrete  kernel $k=\{k_n:n\ge 1\}$ satisfying
\begin{enumerate}
%\item[(S1')] $k$ is a positive
\item[(S1')] $k$ is a regularly varying sequence with index $-1-\alpha$ for $\alpha>0$,
\item[(S2')] $a+\sum_{j=1}^\infty k_j <0.$ \end{enumerate}
Then $k$ satisfies the assumptions of the Theorem 3.2 in \cite{App_exact_discrete} and the fundamental solution of \eqref{eq.fund1} converges to zero at the same rate as $k$:
\begin{equation} \label{r_asy_sub_discr}
\lim_{n\to \infty} \frac{r_n}{k_n}=\frac{1}{\left(a+\sum_{j=1}^\infty k_j\right)^2}=:L_d.\end{equation}
Again, the stationary solution of \eqref{discr_stoch} exists and the exact rate of decay of the autocovariance function can be determined.
\begin{theorem}
Suppose $k$ satisfy (S1') and  (S2'). Let $r$ be solution of \eqref{discrete}. Let $\xi=\{\xi_n:n\in \mathbb{Z}\}$ be a  sequence of random variables defined as in Theorem \ref{theorem.discrete}.  Then, the autocovariance function $c(\cdot)=\Cov(X_n, X_{n+\cdot})$ of the stationary process defined in \eqref{eq.stochstat_discr} satisfies
\begin{equation}
\label{asy_s_sub_discr}
\lim_{h\to \infty} \frac{c(h)}{k_h}=\frac{\sigma^2}{\left(-a-\sum_{j=1}^\infty k_j\right)^3}>0.\end{equation}
\end{theorem}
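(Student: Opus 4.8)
The plan is to mirror the proof of Theorem~\ref{thm_suexp_cont} in the discrete setting. The starting point is the series representation \eqref{cov_discrete}, namely $c(h)=\sigma^2\sum_{n=0}^\infty r_n r_{n+h}$, so it suffices to show that $c(h)/k_h$ converges as $h\to\infty$. Since $k$ is regularly varying of index $-1-\alpha<-1$ it is summable and square summable, and by \eqref{r_asy_sub_discr} the resolvent obeys $r_n\sim L_d k_n$; hence $r$ is itself regularly varying of index $-1-\alpha$ and lies in $\ell^1\cap\ell^2$, so that the stationary process and its autocovariance are well defined. First I would fix $N$ and split
\[ \frac{c(h)}{k_h}=\sigma^2\sum_{n=0}^N \frac{r_n r_{n+h}}{k_h}+\sigma^2\sum_{n=N+1}^\infty \frac{r_n r_{n+h}}{k_h}. \]
For the finite head sum I would write $r_{n+h}/k_h=(r_{n+h}/k_{n+h})(k_{n+h}/k_h)$ and combine $r_{n+h}/k_{n+h}\to L_d$ with the uniform convergence $k_{n+h}/k_h\to 1$ for regularly varying sequences (the discrete Potter bound, cf.~\cite{Goldie}, Theorem 1.5.6), which is legitimate because the sum over $n\le N$ is finite; this gives that the head tends to $L_d\sum_{n=0}^N r_n$ as $h\to\infty$.

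To control the tail uniformly in $h$, I would first replace $k$ by an eventually non--increasing sequence $\lambda_n\sim k_n$ (the discrete analogue of the majorant supplied by \cite{Goldie}, Theorem 1.8.3, exactly as $\lambda$ is used in the proof of Theorem~\ref{thm_suexp_cont}) and write $r_{n+h}/k_h=(r_{n+h}/\lambda_{n+h})(\lambda_{n+h}/\lambda_h)(\lambda_h/k_h)$. Since $\lambda$ is non--increasing, $\lambda_{n+h}/\lambda_h\le 1$, while $r_{n+h}/\lambda_{n+h}\to L_d$ and $\lambda_h/k_h\to 1$, so for all large $h$ the summand is bounded by $2L_d r_n$ uniformly in $n$. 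As $r\in\ell^1$, the tail is then at most $2\sigma^2 L_d\sum_{n=N+1}^\infty r_n$, which is made arbitrarily small by taking $N$ large. Letting $h\to\infty$ and then $N\to\infty$ yields $c(h)/k_h\to\sigma^2 L_d\sum_{n=0}^\infty r_n$.

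It remains to evaluate $\sum_{n=0}^\infty r_n$. Introducing the generating functions $R(z)=\sum_{n\ge 0}r_n z^n$ and $K(z)=\sum_{j\ge 1}k_j z^j$ and feeding the recurrence \eqref{discrete} into $R$, one finds $R(z)=(1-z(1+a+K(z)))^{-1}$; since $r\in\ell^1$, Abel's theorem permits sending $z\to 1^-$, giving $\sum_{n=0}^\infty r_n=R(1)=-1/(a+\sum_{j=1}^\infty k_j)$, which is positive by (S2'). Combining this with $L_d=(a+\sum_{j=1}^\infty k_j)^{-2}$ from \eqref{r_asy_sub_discr} produces $\sigma^2 L_d\sum_n r_n=\sigma^2/(-a-\sum_{j=1}^\infty k_j)^3$, which is exactly \eqref{asy_s_sub_discr}. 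The main obstacle I anticipate is the tail estimate: unlike the continuous integral, the discrete sum requires a monotone majorant $\lambda$ for $k$ together with a discrete uniform--convergence bound in order to dominate the summand uniformly in $h$, and one must be careful to pair the limit $r_{n+h}/\lambda_{n+h}\to L_d$ with the monotonicity of $\lambda$ rather than of $k$ itself. The generating--function evaluation of $\sum_n r_n$ is the one genuinely new ingredient compared with Theorem~\ref{thm_suexp_cont}, where the corresponding constant arose from the Laplace--transform identity $\hat r(0)=-1/(a+\int_0^\infty k(s)\,ds)$.
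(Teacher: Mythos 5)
Your proposal is correct and follows essentially the same route as the paper's proof: the identical head/tail splitting of $\sum_{n}r_n r_{n+h}/k_h$ at a cutoff $N$, the same non--increasing majorant $\lambda\sim k$ (via \cite{Goldie}, Theorem 1.8.3, applied to $k_{[x]}$) to bound the tail by $2L_d\sum_{n>N}|r_n|$, and Potter's bound plus $r_{n+h}/k_{n+h}\to L_d$ for the finite head, yielding $c(h)/k_h\to\sigma^2 L_d\sum_{n=0}^\infty r_n$. The only place you go beyond the paper is in making explicit the generating--function/Abel argument for $\sum_{n=0}^\infty r_n=-1/\bigl(a+\sum_{j=1}^\infty k_j\bigr)$, which the paper merely asserts with ``Similarly''; this is a filled--in detail rather than a different method.
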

\begin{proof}
The autocovariance function of the stationary solution is again given by \eqref{cov_discrete}. Since $(k_n)_{n\in \mathbb{N}}$ is a regularly varying sequence, the function $x\mapsto k(x):= k_{[x]}$ is a regularly varying function with index $-1-\alpha$. Hence, we may choose the function $\lambda$ as in the proof of Theorem  \ref{thm_suexp_cont}.  Since $r$ is absolutely summable, we choose for an  arbitrary $\epsilon>0$ a sufficiently large $N$, so that $2L_d\sum_{n=N+1}^\infty |r_n|<\epsilon.$ %As in Theorem 1.8.3 in \cite{Goldie}, there exists a decaying  function $\lambda$ with $k(t)\sim \lambda(t)$ for $t\to \infty$.  Without loss of generality, we may choose $\lambda$ to satisfy $k(t)/\lambda(t)\le c$ for all $t>0$, with come constant $c$. Since $r$ is integrable, we choose  arbitrary $\epsilon>0$ $T$ to be sufficiently large, so that $L\int_T^\infty |r(s)|\, d s<\epsilon.$
Similarly to the continuous case, we split the sum and study each term separately:
\begin{equation} \label{cov_split_discr} \sum_{n=0}^\infty  \frac{r_{n+h}r_n}{k_h }=\sum_{n=0}^N \frac{r_{n+h}}{k_{n+h}}\frac{k(n+h)}{k(h) }r_n+\sum_{n=N+1}^\infty \frac{\lambda(h)}{k(h)} \frac{r_{n+h}}{\lambda(n+h) }\frac{\lambda(n+h)}{\lambda(h)}r_n.\end{equation}
%We rewrite the second sum as
%\[\sum_{n=T}^\infty \frac{r_{n+h}r_n}{k_n }=\sum_{n=T}^\infty \frac{\lambda(n)}{k(n)} \frac{r_{n+h}}{\lambda(n+h) }\frac{\lambda(n+h)}{\lambda(n)}r_n.\]
The sequence $r_h/\lambda(h)$ converges to $L_d$ as $h\to \infty$, so  the terms of the second sum are bounded for sufficiently large $h$ by $2L_d|r_n|$. Therefore,
\[\limsup_{h\to \infty}\left|\sum_{n=N+1}^\infty \frac{r_{n+h}r_n}{k_n }\right|\le2 L_d\sum_{n=N+1}^\infty| r_n|<\epsilon.\]
Let us now consider the first term in \eqref{cov_split_discr}. Applying Potter's bound to the function $k(x)$ as in \eqref{potter_sub} we obtain the discrete version of \eqref{potter1_sub}. Thus,
\[\lim_{h\to \infty} \sum_{n=0}^N  \frac{r_{n+h}r_n}{k_h }=L_d\sum_{n=0}^N r_n.\]
Similarly, $\sum_{n=0}^\infty r_n=-1/(a+\sum_{j=1}^\infty k_j)$ and the claim follows.
\end{proof}
\section{Proof of Theorem \ref{thm.existence covarfn stat}}\label{proof1}
First we show that the process defined by \eqref{eq.Xstat} has a continuous modification.
Applying  It\^{o}'s lemma, Cauchy-Schwarz inequality and Fubini's theorem we obtain\begin{align*}
\mathbb{E}((X(t)-X(u))^2)&=\sigma^2\int_{\mathbb{R}} (r(t-s)\I_{\{ s\le t\}}-r(u-s)\I_{\{ s\le u\}})^2\,d s\\
 &=\sigma^2\int_{\mathbb{R}} \left(\int_{u}^t r'(v-s) \,d v\right)^2 \,d s\\
 %&\le \sigma^2(t-u)\int_{\mathbb{R}} \int_{u}^t (R'(v-s))^2 \d v \d s\\
 &\le \sigma^2 (t-u)\int_{\mathbb{R}} \int_{u}^t (r'(v-s))^2 \,d v \,d s\\
% &=(t-u)\sigma^2\int_{u}^t\int_{\mathbb{R}}(r'(v-s))^2 \,d s\,d v\\
 &=\sigma^2(t-u)\int_{u}^t\int_{\mathbb{R}}(r'(s))^2 \,d s\,d v\\
 &\sigma^2(t-u)^2\int_{\mathbb{R}}(r'(s))^2 \,d s.
 \end{align*} Now, $r$ is square integrable and with $\| k \mathbin{*}r\|_{L^2}\le \|k\| _{L^1}\| r\|_{L^2}$, we have $r'\in L^2((0, \infty);\mathbb{R})$. Here, $(k \mathbin{*}r)(\cdot)$ denotes the convolution of $k$ and $r$, given by $\int_0^\cdot k(s)r(\cdot-s)\, ds$.
 The Kolmogorov-Chentsov theorem (see e.g~\cite{karatzas}, Theorem 2.8)  yields that $X$ has a continuous modification.
%Since the $L^2$-norm of convolution of $k$ and $R$ is bounded by $\|k\|_{L^1}\|R\|_{L^2}$, also $R'$ belongs to $L^2([0, \infty))$ and
It remains to show that the process defined by  \eqref{eq.Xstat} solves \eqref{eq.stochstat}. We write
\begin{align*}
 \!\!X(t)-X(0)&=\int_{-\infty}^0 ((r(t-s)-r(-s))\,d B(s)+ \int_0^t r(t-s)\,d B(s)\\
&=\int_{-\infty}^t \int_0^t r'(u-s)\,d u \,d B(s) +\sigma B(t)\\
&=\int_{-\infty}^t \int_0^t\left( ar(u-s)+\int_0^{u-s}r(u-s-v)k(v)\,d v\right)\,d u\,d B(s)+\sigma B(t)\\
&= \int_0^t\!\!\int_{-\infty}^u\! \!\! \!ar(u\!-\!s)\, dB(s)\, du\!+\!\int_0^t\! \int_0^\infty \!\!\!\!\int_{-\infty}^{u-v}\!\!\! \!\!\!r(u\!-\!s\!-\!v)\,d B(s)\,d v\,d u+\sigma B(t)\\
&=\int_0^t aX(u)\, du +\int_0^t \int_0^\infty k(v)X(u-v)\,d v \,d u +\sigma B(t).
\end{align*}
Since $r$ and $B$ are continuous, we are able to apply stochastic Fubini' theorem (e.g. \cite{protter}, Ch.IV.6, Thm.~65), if
\[ \int_{-\infty}^t\int_0^t \!r(u-s)^2\, du \, ds<\infty \; \mbox{ and }\; \int_{-\infty}^t\int_0^t\left(\int_0^{u-s}\!\!\!\!r(u-s-v)k(v)\,d v\right)^2\, du \, ds<\infty.\]
The statement follows from classical Fubini's theorem and the fact that $r\in L^2([0, \infty);\mathbb{R})$ and $\| k \mathbin{*}r\|_{L^2}\le \|k\| _{L^1}\| r\|_{L^2}$.
\section{Poof of Theorem \ref{theorem.mainresult}} \label{proof}
Suppose that $r\in C([0,\infty);(0,\infty))$ obeys
\begin{equation} \label{eq.rinRV}
r\in \text{RV}_\infty(\mu) \quad\text{for some $\mu\in (-1,-1/2)$}.
\end{equation}
%and
%\begin{equation} \label{eq.rinl2}
%r\in L^2((0,\infty),(0,\infty)).
%\end{equation}
Since $r\in L^2([0, \infty);\mathbb{R})$, there exists $c:[0,\infty)\to (0,\infty)$ such that
\begin{equation} \label{def.c}
c(t) = \int_0^\infty r(s)r(s+t)\,ds, \quad t\geq 0.
\end{equation}
By assuming \eqref{eq.rinRV}, we exclude the possibility that $r\in L^1([0,\infty);\mathbb{R})$.
Our first result is the following rate of decay of $c$.
\begin{theorem} \label{theorem.acvfasymptotics}
Suppose that $r$ is a positive continuous function which
obeys  \eqref{eq.rinRV} for some $\mu\in (-1,-1/2)$.
Then the function $c$ in \eqref{def.c} is well--defined and moreover obeys
\begin{equation} \label{eq.casymptoticsr}
\lim_{t\to\infty} \frac{c(t)}{tr^2(t)} = \frac{\Gamma(-1-2\mu)\Gamma(1+\mu)}{\Gamma(-\mu)}=:L>0.
\end{equation}
\end{theorem}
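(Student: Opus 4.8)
The plan is to reduce the statement to a single convergent integral by a scaling substitution and then to identify the limit as a Beta integral. First I would note that $c$ is well defined: writing $r(t)=t^\mu\ell(t)$ with $\ell$ slowly varying, the hypothesis $\mu<-1/2$ gives $2\mu<-1$, so Karamata's theorem (\cite{Goldie}, Theorem 1.5.11) together with the continuity of $r$ on compacts shows $r\in L^2([0,\infty);\mathbb{R})$, whence $c(t)\le\int_0^\infty r(s)^2\,ds<\infty$ by Cauchy--Schwarz. Substituting $s=tu$ in \eqref{def.c} yields
\begin{equation*}
\frac{c(t)}{t\,r^2(t)}=\int_0^\infty \frac{r(tu)}{r(t)}\cdot\frac{r(t(1+u))}{r(t)}\,du,
\end{equation*}
so the whole theorem comes down to showing that the right--hand side converges to $\int_0^\infty u^\mu(1+u)^\mu\,du$. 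Granting this convergence, the limit is a standard Beta integral: with $p=1+\mu$ and $q=-1-2\mu$ (both positive precisely because $-1<\mu<-1/2$) one has $\int_0^\infty u^{p-1}(1+u)^{-(p+q)}\,du=B(p,q)$, and since $p+q=-\mu$ this gives exactly $\int_0^\infty u^\mu(1+u)^\mu\,du=\Gamma(1+\mu)\Gamma(-1-2\mu)/\Gamma(-\mu)=L$, as required.

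For the convergence I would use dominated convergence after splitting the domain. For each fixed $u>0$ the integrand tends to $u^\mu(1+u)^\mu$ by the uniform convergence theorem for regularly varying functions (\cite{Goldie}), since $tu\to\infty$ and $t(1+u)\to\infty$. On a region $u\ge\eta$ (with $\eta>0$ fixed) Potter's bound (\cite{Goldie}, Theorem 1.5.6) applies to both factors once $t\eta$ is large: for any small $\delta>0$ there is a constant $A$ with $r(tu)/r(t)\le A\max\{u^{\mu+\delta},u^{\mu-\delta}\}$ and likewise for the second factor, yielding a $t$--independent dominating function that is integrable on $[\eta,\infty)$ provided $\delta<-\mu-\tfrac{1}{2}$ (possible since $\mu<-1/2$). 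Dominated convergence then gives $\int_\eta^\infty\to\int_\eta^\infty u^\mu(1+u)^\mu\,du$.

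The delicate part, which I expect to be the main obstacle, is the neighbourhood of $u=0$, where $tu$ may be small and the regular--variation control of $r$ near the origin is unavailable, so no pointwise domination is possible. Here I would instead bound the whole contribution: for $u\in[0,\eta]$ one has $t(1+u)\in[t,t(1+\eta)]$, so Potter's bound makes $r(t(1+u))/r(t)\le C$ uniformly, and then
\begin{equation*}
\int_0^\eta \frac{r(tu)}{r(t)}\cdot\frac{r(t(1+u))}{r(t)}\,du\le \frac{C}{t\,r(t)}\int_0^{t\eta} r(s)\,ds\longrightarrow \frac{C\,\eta^{1+\mu}}{1+\mu}
\end{equation*}
as $t\to\infty$, again by Karamata's theorem and $r(t\eta)/r(t)\to\eta^\mu$. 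Since $1+\mu>0$, this bound tends to $0$ as $\eta\to0$, and so does the tail $\int_0^\eta u^\mu(1+u)^\mu\,du$ of the limit integral. A routine $\varepsilon$--argument, letting first $t\to\infty$ and then $\eta\to0$, upgrades the convergence on $[\eta,\infty)$ to convergence of the full integral, completing the proof.
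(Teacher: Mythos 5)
Your proof is correct, and its skeleton matches the paper's: rescale $s=tu$, split the integral at a small $\eta$, use dominated convergence on $[\eta,\infty)$, use a Karamata estimate to kill the contribution of $[0,\eta]$, and identify the limit as the Beta integral $B(1+\mu,-1-2\mu)=\Gamma(1+\mu)\Gamma(-1-2\mu)/\Gamma(-\mu)$. The genuine difference lies in how uniform control of $r(t(1+u))/r(t)$ is obtained on the near region $u\in[0,\eta]$. The paper first assumes $r$ is \emph{decreasing}, so that $r(t+s)\le r(t)$ is trivial, and then spends the second half of its proof transferring the result to general $r$ via the non--increasing envelope $\rho(t)=\sup_{x\ge t}r(x)$, which is asymptotically equivalent to $r$ (\cite{Goldie}, Theorem 1.5.3). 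You instead invoke Potter's bound to get $r(t(1+u))/r(t)\le C$ uniformly for $u\in[0,\eta]$ and $t$ large, which works for an arbitrary positive continuous $r\in \text{RV}_\infty(\mu)$ and eliminates the two--stage structure entirely. Your route is shorter and arguably cleaner; indeed Potter's theorem is also the right tool for the domination on $[\eta,\infty)$, where the paper's claim that the additive uniform convergence $r(tx)/r(t)\to x^\mu$ on $x\ge\delta$ yields the multiplicative bound $r(tx)r(t(x+1))/r(t)^2\le 2x^\mu(x+1)^\mu$ is not immediate (the uniform error need not be small compared to $x^\mu$ as $x\to\infty$) and is most easily repaired by exactly the Potter argument you give. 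What the paper's detour buys is an argument whose first stage uses only monotonicity and the uniform convergence theorem; what yours buys is brevity and the avoidance of the $\varepsilon$--bookkeeping needed to compare $\int r(s)r(s+t)\,ds$ with $\int\rho(s)\rho(s+t)\,ds$. One point to make explicit in a final write--up: the Potter threshold on $[\eta,\infty)$ depends on $\eta$ (you need $t\ge X/\eta$ so that $tu\ge X$ for all $u\ge \eta$), but since $\eta$ is fixed before letting $t\to\infty$, dominated convergence applies exactly as you say.
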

\begin{proof}
For $\mu \in (-1,-1/2)$ we have
$\int_0^\infty  x^\mu (x+1)^\mu d x=L.$
First we suppose that $r$ is decreasing. In this case we choose  for an arbitrary $0<\epsilon<1$ a $\delta=\delta(\epsilon) >0$ such that
$\int_0^\delta  x^\mu (x+1)^\mu d x< \epsilon.$ The Uniform Convergence Theorem (\cite{Goldie}, Theorem 1.5.2) yields that
\[ \frac{r(tx)}{r(t)} \to x^\mu, \quad \mbox{uniformly in $x$, for all } x\ge \delta.\] Hence, there exists a $t_0=t_0(\delta)$ such that
\[ \frac{r(tx)r(t(x+1))}{r(t)^2}\le 2 x^{\mu} (x+1)^{\mu}, \quad \mbox{for all } t\ge t_0, x>\delta.\]
The function on the right hand side is integrable, hence, the dominated convergence theorem yields that
\[\lim_{t\to \infty} \int_\delta^\infty \frac{r(tx)r(t(x+1))}{r(t)^2}d x= \int_\delta^\infty\lim_{t\to \infty} \frac{r(tx)r(t(x+1))}{r(t)^2}d x= \int_\delta ^\infty x^\mu (x+1)^\mu d x.\]
There exists a $t_1=t_1(\delta)>t_0$ such that
\begin{gather}\left| L- \int_{\delta t}^\infty\frac{r(s)r(t+s)}{tr(t)^2}\,ds\right| = \left| L-\int_\delta^\infty \frac{r(tx)r(t(x+1))}{r(t)^2}d x\right|\notag\\
\le \epsilon +\left| \int_\delta ^\infty x^\mu (x+1)^\mu d x -\int_\delta^\infty \frac{r(tx)r(t(x+1))}{r(t)^2}d x\right| \le 2\epsilon \label{bound2}\end{gather}
for all $t\ge t_1$.
On the other hand, using the monotonicity of $r$ we obtain\[
 \int_0^{\delta t} \frac{r(s)r(t+s)}{tr(t)^2}\,ds\le  \int_0^{\delta t} \frac{r(s)}{tr(t)}\,ds=  \frac{R(t)}{r(t)t}\frac{R(\delta t)}{R(t) }, \]
where $R(t)=\int_0^t r(s)\,ds \in \mbox{RV}_\infty(\mu+1).$ It follows from Karamata's Theorem \cite{Goldie}, Theorem 1.5.11, that
\[ \frac{R(t)}{tr(t)} \to \frac{1}{\mu+1}.\] Choosing $\delta$ small enough and a $t_2(\delta)>t_1(\delta)$ large enough we obtain
\begin{equation} \int_0^{\delta t} \frac{r(s)r(t+s)}{tr(t)^2}\,ds \le 2 \lim_{t\to \infty}\frac{R(t)}{r(t)t}\frac{R(\delta t)}{R(t) }=2\frac{1}{\mu +1} \delta^{\mu +1} \le \epsilon,\label{bound1}\end{equation}
for all $t\ge t_2$.
Hence, combining (\ref{bound1}) and  (\ref{bound2}) we get for all $t\ge t_2$\[
\left| L-\int_{0}^\infty\frac{r(s)r(t+s)}{tr(t)^2}\,ds\right| \le \left| L-\int_{\delta t}^\infty\frac{r(s)r(t+s)}{tr(t)^2}\,ds\right|+\int_0^{\delta t}\frac{r(s)r(t+s)}{tr(t)^2}\,ds \le 3\epsilon.\]
Now, for arbitrary $r$ obeying \eqref{eq.rinRV}, let $\rho(t):= \sup\{ r(t)\::\: t\ge x\}$. Then $\rho$ is a positive decreasing function, continuous on $[0, \infty)$ and satisfying $\rho(x)\sim r(x)$ for $x \to \infty$ \cite{Goldie}, Theorem 1.5.3. For an arbitrary $\epsilon>0$ we choose $t_0=t_0(\epsilon)$ such that for all $t>t_0$ we have
\[\left| \frac{1}{ t\rho(t)^2}\int_0^\infty \rho(s)\rho(t+s)\, ds -L\right| \le \epsilon.\]
Since $r(t)/\rho(t)\to 1$ as $t\to\infty$ for every $\varepsilon\in(0,1)$ there exists
$t_1=t_1(\varepsilon)\ge t_0$ such that $1-\varepsilon<r(t)/\rho(t)<1+\varepsilon$ for all $t\geq t_1$.
Therefore
\begin{equation} \label{eq.lower}
 (1-\varepsilon)^2\le \frac{\int_{t_1}^\infty r(s)r(s+t)\,ds}{\int_{t_1}^\infty \rho(s)\rho(s+t)\,ds} \leq  (1+\varepsilon)^2 .
\end{equation}
For $\epsilon$ sufficiently small we obtain
\begin{equation} \label{eq.lower1}
 \left|\frac{\int_{t_1}^\infty r(s)r(s+t)\,ds}{\int_{t_1}^\infty \rho(s)\rho(s+t)\,ds} -1\right|\leq 3 \epsilon.
\end{equation}
Now, since $\rho$ is decreasing, we have for $t\ge t_1$
\begin{equation} \label{eq.finiteintegral nocontrib}
\frac{1}{\rho(t)}\int_0^{t_1} r(s)r(s+t)\,ds
= \int_0^{t_1} r(s)\frac{r(s+t)}{\rho(s+t)}\frac{\rho(s+t)}{\rho(t)}\,ds
\le (1+\epsilon)\int_0^{t_1} r(s)\,ds.
\end{equation}
Therefore as $t\mapsto t\rho(t)$ is in $\text{RV}_\infty(\mu+1)$ and $\mu+1>0$, we have $t\rho(t)\to \infty$
as $t\to\infty$, and so there exists a $t_2=t_2(\epsilon)\ge t_1$ such that
\[ \left|\frac{1}{t\rho^2(t)}\int_0^{t_1} r(s)r(s+t)\,ds\right| \le \epsilon \quad \mbox{and} \quad \left|\frac{1}{t\rho^2(t)}\int_0^{t_1} \rho(s)\rho(s+t)\,ds\right|\le \epsilon
\]
for all $t\ge t_2$.
Therefore,
\begin{gather*}
\left| \frac{1}{t\rho^2(t)}\int_0^{\infty} r(s)r(s+t)\,ds-L\right| \le \left| \frac{1}{t\rho^2(t)}\int_0^{\infty} \rho(s)\rho(s+t)\,ds-L\right|\\  +\left|\frac{1}{t\rho^2(t)}\int_0^{t_1} r(s)r(s+t)\,ds\right|
+\left|\frac{1}{t\rho^2(t)}\int_0^{t_1} \rho(s)\rho(s+t)\,ds\right| \\
+\left|\frac{1}{t\rho^2(t)}\int^\infty_{t_1} r(s)r(s+t)\,ds-\frac{1}{t\rho^2(t)}\int^\infty_{t_1} \rho(s)\rho(s+t)\,ds\right|\\
\le 3\epsilon +\frac{1}{t\rho^2(t)}\int^\infty_{t_1} \rho(s)\rho(s+t)\,ds \left|\frac{\int_{t_1}^\infty r(s)r(s+t)\,ds}{\int_{t_1}^\infty \rho(s)\rho(s+t)\,ds} -1\right|\\
\le 3\epsilon+ 3\epsilon L=(3+3L)\epsilon.
\end{gather*}
Finally we note that
\[\lim_{t\to \infty} \frac{c(t)}{r(t)^2}=\lim_{t\to \infty} \frac{c(t)}{\rho(t)^2}\frac{\rho(t)^2}{r(t)^2}=\lim_{t\to \infty} \frac{c(t)}{\rho(t)^2}.\]
\end{proof}% We note that as $r\in \text{RV}_\infty(\mu)$, by
%\eqref{eq.casymptoticsr}, the function $c\in \text{RV}_\infty(2\mu+1)$. But $2\mu+1\in (-1,0)$, so
%$c\not\in L^1([0,\infty),(0,\infty))$. Hence the stationary solution has long memory.

We now explicitly connect the result of Theorem~\ref{theorem.acvfasymptotics} to the autocovariance function
of the stationary solution of \eqref{eq.stochstat} in the case when $a=-\int_0^\infty k(s)\,ds$ to prove our main result.
\begin{proof}[Proof of Theorem \ref{theorem.mainresult}] $ $\\
It follows  from Theorem \ref{thm:explicitasymptotics} that
\begin{equation}
\lim_{t\to \infty} tr(t)\cdot  L^2(t)t^{1-2\alpha}=\frac{\sin^2 \alpha\pi}{\pi^2}.
\end{equation}
Since $\alpha\in (0,1/2)$ we have that $r\in L^2([0,\infty);(0,\infty))\cap C([0,\infty);(0,\infty))$ and $r\in \text{RV}_\infty(\alpha-1)$
with $\mu:=\alpha-1\in (-1,-1/2)$.
Therefore by Theorem~\ref{theorem.acvfasymptotics} and Theorem~\ref{thm.existence covarfn stat} obtain
\begin{align*}
\lim_{t\to\infty}  c(t) L^2(t)t^{1-2\alpha}
&=\lim_{t\to\infty} \frac{c(t)}{tr^2(t)}\cdot r^2(t) L^2(t)t^{2-2\alpha}\\
&= \sigma^2\frac{\Gamma(1-2\alpha)\Gamma(\alpha)}{\Gamma(1-\alpha)}\cdot \frac{ \sin^2(\pi \alpha)}{\pi^2},\\
%&=\sigma^2\frac{\sin(\alpha \pi) \Gamma(1-2\alpha)}{\pi \Gamma(-\alpha)^2},
\end{align*}
as claimed.
\end{proof}

\end{document}